\documentclass[11pt]{article}
\usepackage{amsthm, amsmath, amssymb, amsfonts, url, booktabs, tikz, setspace, fancyhdr, amsbsy}
\usepackage[margin = 0.8in]{geometry}
\usepackage{hyperref, enumerate}
\usepackage{caption,float}
\usepackage{subcaption}
\usepackage{esint}
\usepackage{verbatim}
\usepackage{multirow}

%\usepackage[color]{showkeys}
%\renewcommand\showkeyslabelformat[1]{\fbox{\normalfont\footnotesize\ttfamily#1}}
%% showkeys font
%\definecolor{refkey}{gray}{.75}
%\definecolor{labelkey}{gray}{.2}

% ------   Theorem Styles -------
\newtheorem{theorem}{Theorem}[section]
\newtheorem{proposition}[theorem]{Proposition}
\newtheorem{lemma}[theorem]{Lemma}

\theoremstyle{definition}
\newtheorem{definition}[theorem]{Definition}
\newtheorem{assumption}[theorem]{Assumption}
\newtheorem{remark}[theorem]{Remark}

%\theoremstyle{remark}
%\newtheorem*{remark}{Remark}

% ----  Text Styles ----

% -----  Named Operators  ------

\DeclareMathOperator*{\argmin}{arg\,min}

% ----- Delimiters ----

\newcommand{\norm}[1]{\left\lVert#1\right\rVert}

% ------ Operator symbols ------

\newcommand{\R}{\mathbb{R}}
\newcommand{\N}{\mathbb{N}}
\newcommand{\V}{\mathbb{V}}
\newcommand{\G}{\mathcal{G}}
\newcommand{\LL}{\mathcal{L}}
\newcommand{\HH}{\mathcal{H}}
\newcommand{\RR}{\mathcal{R}}
\newcommand{\EE}{\mathbb{E}}
\newcommand{\defeq}{\mathrel{\mathop:}=}

\newcommand{\dx}{\,\mathrm{d}x}

\newcommand{\NN}{\mathcal{N}}
\newcommand{\aaa}{\boldsymbol{a}}

\newcommand{\bb}{\boldsymbol{b}}
\newcommand{\om}{\omega}
\newcommand{\dom}{\,\mathrm{d}\omega}

\numberwithin{equation}{section}

\def\ocirc#1{\ifmmode\setbox0=\hbox{$#1$}\dimen0=\ht0
    \advance\dimen0 by1pt\rlap{\hbox to\wd0{\hss\raise\dimen0
    \hbox{\hskip.2em$\scriptscriptstyle\circ$}\hss}}#1\else
    {\accent"17 #1}\fi}

\begin{document}

\title{Error analysis for finite element operator learning methods for solving parametric second-order elliptic PDEs}

\author{Youngjoon Hong\thanks{Department of Mathematical Sciences, KAIST, Daejeon, Republic of Korea. Email: \tt{hongyj@kaist.ac.kr}} , ~Seungchan Ko\thanks{Department of Mathematics, Inha University, Incheon, Republic of Korea. Email: \tt{scko@inha.ac.kr}}
,~and ~Jaeyong Lee\thanks{Department of AI, Chung-Ang University, Seoul, Republic of Korea. Email: \tt{jaeyong@cau.ac.kr}}}

\date{~}

\maketitle

~\vspace{-1.5cm}

\begin{abstract}
In this paper, we provide a theoretical analysis of a type of operator learning method without data reliance based on the classical finite element approximation, which is called the finite element operator network (FEONet). 
We first establish the convergence of this method for general second-order linear elliptic PDEs with respect to the parameters for neural network approximation. 
In this regard, we address the role of the condition number of the finite element matrix in the convergence of the method. Secondly, we derive an explicit error estimate for the self-adjoint case. 
For this, we investigate some regularity properties of the solution in certain function classes for a neural network approximation, verifying the sufficient condition for the solution to have the desired regularity. Finally, we will also conduct some numerical experiments that support the theoretical findings, confirming the role of the condition number of the finite element matrix in the overall convergence.
\end{abstract}

\noindent{\textbf{Keywords:} unsupervised operator learning, convergence analysis, complex geometry, finite element method, condition number, approximation error, generalization error, Rademacher complexity}

\smallskip

\noindent{\textbf{AMS Classification:} 65N30, 65M60, 65N12, 68T07, 68U07}

\section{Introduction}\label{sec_intro}
%Numerical methods play a crucial role in approximating solutions for partial differential equations (PDEs), particularly when exact solutions are unfeasible for complex systems. 
%Among the various techniques—finite difference, finite element, and finite volume methods—the finite element method (FEM) is prominently used in engineering and physics \cite{FEM_book1, FEM_book2}.
%Significant advancements in the mathematical analysis of FEM have notably enhanced its reliability \cite{FEM_book3, FEM_book4}. 
%Its ability to adeptly handle irregular geometries and complex boundary conditions makes FEM invaluable for diverse applications, including structural analysis, heat transfer, fluid dynamics, and electromagnetic studies \cite{FEM_book5, FEM_book6}. 
%However, the application of these numerical methods, including FEM, is often accompanied by considerable computational cost.
%In this paper, we provide a convergence analysis of a type of scientific machine learning based on the classical finite element approximation.
The emerging field of scientific machine learning, which bridges the gap between traditional numerical analysis and machine learning, has introduced innovative approaches that enrich conventional numerical methods, especially in tackling complex tasks. At the forefront of this evolution is the field of physics-informed neural networks (PINNs) \cite{pinn01}. PINNs employ neural networks trained to comprehend the underlying physics of systems, thereby enhancing the capability to solve PDEs for physics-based problems using neural networks.
%\noteKo{thereby significantly improving our ability to solve PDEs (would this sentence be okay for the classical FE guys?)} 
This advancement has spurred the development of various PINN variants \cite{MR4554720,yang2021b}. However, these variants are limited by their need for retraining with each new set of input data, such as initial conditions and boundary conditions, which hampers their utility in dynamic systems where real-time predictions are essential.

Addressing this limitation, operator networks have emerged, employing data-driven approaches to understand mathematical operators in physical systems, particularly for parametric PDEs \cite{li2023fourier, boulle2023mathematical}. 
A significant breakthrough in this area is the Deep Operator Network (DeepONet) architecture \cite{lu2021learning}, founded on the universal approximation theorem for operators. DeepONets facilitate rapid solution prediction when PDE data varies, but depends on extensive pre-computed training data pairs, which is a demanding task, especially for complex or nonlinear systems.
To mitigate these issues, hybrid models like Physics-Informed Neural Operator (PINO) \cite{li2021physics} and Physics-Informed DeepONet (PIDeepONet) \cite{wang2021learning} have been introduced. 
These models amalgamate the strengths of PINNs and operator learning by embedding physical equations within the loss function of neural operators. Despite this innovation, they often face challenges such as reduced accuracy in complex geometries, difficulty managing stiff problems, and significant generalization errors due to limited input data \cite{jagtap2020extended,costabal2024delta,lee2023hyperdeeponet}.
Furthermore, employing neural networks as the solution space complicates the imposition of various boundary values, consequently impacting the precision of solutions \cite{choi2024spectral}.

In response to these limitations, a novel unsupervised operator network based on finite element methods, termed the Finite Element Operator Network (FEONet), was developed \cite{FEONet}. 
The main focus of this paper is on the extensive error analysis of FEONet. 
In the finite element method (FEM) framework, the numerical solution $u_h(x)$ is approximated as a linear combination of nodal coefficients $\alpha_k$, and nodal basis functions, $\phi_k(x)$, defined by piecewise polynomials over a mesh. This is represented as
$
u_h(x) = \sum \alpha_k \phi_k(x), x \in \mathbb{R}^d.
$
Building on this concept, the FEONet predicts PDE solutions under various inputs like initial conditions and boundary conditions. It is versatile and able to handle multiple PDE instances across complex domains as shown in Figure \ref{fig:meshes} without data reliance. 
The loss function of FEONet, inspired by the classical FEM, is based on the residual of the finite element approximation, ensuring accurate PDE solutions and exact compliance with boundary conditions. 
The FEONet approach infers coefficients, $\widehat{\alpha}_k$, for constructing the linear combination $\sum\widehat{\alpha}_k\phi_k$ to approximate PDE solutions. 
Thanks to the ability of FEM to handle boundary conditions, solutions predicted by FEONet also satisfy exact boundary conditions. 
Notably, its unique feature lies in solving parametric PDEs without relying on any paired input-output training data, a significant step forward in computational efficiency and application versatility. Another advantage of the FEONet is its applicability to singularly perturbed problems. 
By integrating the boundary layer element into the finite element space using the corrector basis function, we can establish an enriched basis scheme for utilization in the FEONet, allowing the model to capture sharp transitions accurately; see \cite{FEONet} for more details.

\begin{figure}
\begin{center}
\includegraphics[width=0.9\textwidth]{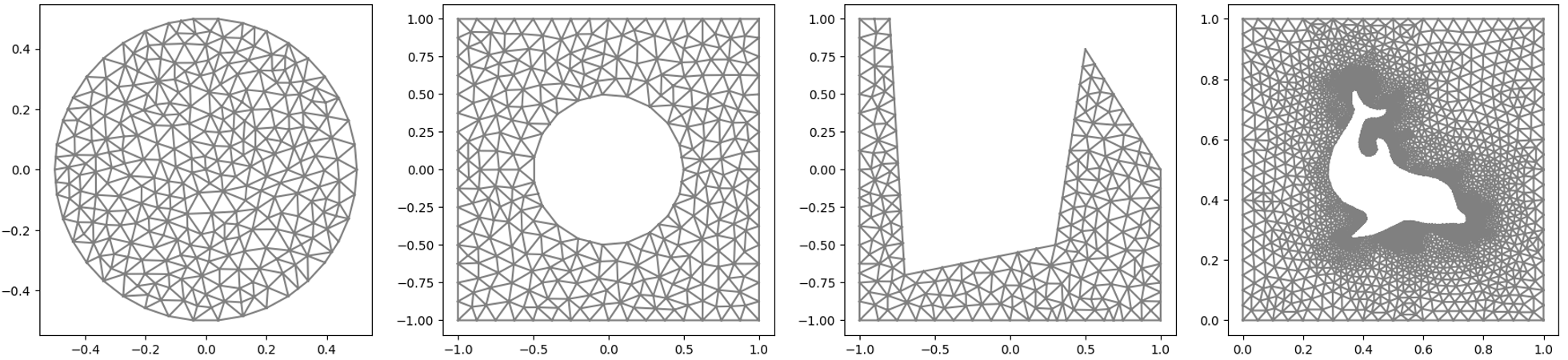}
\end{center}
\caption{Examples of complex domains where FEONet can predict solutions.}
\label{fig:meshes}
\end{figure}

While significant progress has been made in scientific machine learning, a notable gap remains in the area of rigorous convergence analysis, which is essential for establishing the reliability of these novel methodologies \cite{marcati2023exponential, elbrachter2022dnn, de2024wpinns}. 
Some analytical progress has been made with the Deep Ritz method \cite{marwah2023neural,rade_upper_3, rade_upper_2}, PINNs \cite{shin2020convergence, mishra2023estimates}, and Operator Network approaches \cite{lanthaler2022error, kovachki2021universal}. 
However, discrepancies between theoretical results and numerical experiment outcomes frequently arise, highlighting the need for more comprehensive and aligned studies in this area. In addition, to date, there has been a lack of convergence analysis for unsupervised operator networks such as the FEONet method.
In this regard, a key part of our contribution is integrating the well-established FEM theory within the FEONet framework. This integration enables a more grounded and consistent convergence analysis, closely aligned with the outcomes of computational experiments.
Our research focuses on this convergence analysis, bridging the gap between theoretical rigor and practical application. 
It highlights the potential of FEONet as a reliable tool for solving second-order elliptic PDEs, demonstrating its suitability for complex computational challenges.

The primary objective of the present paper is to perform a theoretical analysis of the FEONet, addressing the convergence of the method and deriving an error estimate. To be specific, we consider the general second-order linear elliptic PDE of the form
\begin{align}
    -\,{\rm{div}}\,(\aaa(x)\nabla u)+\bb(x)\cdot\nabla u+c(x)u&=f(x)\quad{\rm{in}}\,\,D,\label{eq1} \\
u(x)&=g(x)\quad{\rm{on}}\,\,\partial D.\label{eq2}
\end{align}
A similar type of theoretical analysis was performed in \cite{ULG_anal, FEONet}. Note, however, that the result in \cite{ULG_anal, FEONet} is restricted to the case of self-adjoint equations, and only a partial convergence result was obtained. In this paper, we will consider more general types of equations and investigate the comprehensive convergence properties of the FEONet. The main novelty of the present paper can be summarized as follows:
In Section \ref{sec:conv}, we prove the convergence of the predicted solution by the FEONet. Our analysis is applicable to a general class of equations, which generalizes the previous results in \cite{ULG_anal, FEONet}. In the proof of convergence, we see that the convergence of the FEONet is influenced by the condition number of the finite element matrix, which is also confirmed by the numerical experiments. 
Subsequently in Section \ref{sec:err_est}, unlike the previous papers where the parameters of baseline numerical methods were fixed, we identify the role of these parameters in overall convergence and conduct the complete theoretical analysis of the method. 
Additionally, in certain scenarios, we derive explicit error estimates for FEONet, utilizing a novel regularity theory developed for our method. Section \ref{sec:num_exp} then presents numerical experiments that validate our theoretical findings. Furthermore, guided by the theoretical results obtained in the previous sections, we utilize the preconditioning techniques from the numerical analysis and confirm their significant impact on convergence and training efficiency.

In the next section, as a starting point, we briefly describe the idea of FEONet and demonstrate how it is trained and it predicts the solution within a suitable finite element setting. 
We also collect some results for the eigenvalue estimates of the finite element matrices which play an important role in the entire analysis. 
Additionally, we shall set up an analytic background for the theoretical analysis as well as the mathematical description of neural networks. 
Then the main sections of the paper proceed as described above, and some concluding remarks are made at the end of the paper.

\section{Framework for finite element operator networks}\label{sec:prelim}

In the equations \eqref{eq1}-\eqref{eq2}, we shall assume that 
\begin{equation}\label{coef_cond_1}
\aaa\in L^{\infty}(D)^{d\times d},\, \bb\in W^{1,\infty}(D)^d,\, c\in L^{\infty}(D)\,\,{\rm{and}}\,\, f\in H^{-1}(D). \end{equation}
For the diffusion coefficients $\aaa=(a_{ij})$, we also assume that the uniform ellipticity condition holds: there exists a positive constant $\tilde{a}>0$ such that
\begin{equation}\label{coef_cond_2}
    \sum^d_{i,j=1}a_{ij}(x)\xi_i\xi_j\geq\tilde{a}\sum^d_{i=1}\xi^2_i,\quad\forall\xi=(\xi_1,\cdots,\xi_d)\in\R^d,\quad x\in \overline{D}.
\end{equation}
For the well-posedness of the equation \eqref{eq1}-\eqref{eq2} we further assume that
\begin{equation}\label{coef_cond_3}
    c(x)-\frac{1}{2}\,{\rm{div}}\,\bb(x)\geq 0,\quad x\in\overline{D}.
\end{equation}
The corresponding weak formulation is defined as follows: find $u\in H^1_0(D)$ such that
\[
B[u,v]:=\int_{D}\aaa(x)\nabla u\cdot\nabla v\dx+\int_{D}\bb(x)\cdot\nabla u v\dx+\int_{D}c(x)uv\dx=\int_D
f(x)v\dx=:l(v)\quad\forall v\in H^1_0(D).
\]
Thanks to the assumptions \eqref{coef_cond_1}, \eqref{coef_cond_2} and \eqref{coef_cond_3}, there holds for some constants $c_0$, $c_1$ and $c_2>0$ that
\begin{equation}\label{lax_cond}
B[v,v]\geq c_0\|v\|_{H^1(D)},\,\,|B[u,v]|\leq c_1\|u\|_{H^1(D)}\|v\|_{H^1(D)},\,\,{\rm{and}}\,\,|\ell(v)|\leq c_2\|v\|_{H^1(D)},
\end{equation}
and the existence of a unique weak solution follows by the standard Lax--Milgram theory (see, e.g., \cite{BS_book}).

\subsection{Finite element operator networks}\label{subsec:desc}
In this section, we aim to describe FEONet, the main numerical scheme under consideration. This is a novel method introduced in \cite{FEONet}, which utilizes the approximation power of deep neural networks in conjunction with the classical finite element approximation. More precisely, this method leverages the FEM for neural networks to learn the solution operator without any paired input-output training data to solve a wide range of parametric PDEs.

As a first step, let us define the finite element space, which will be used throughout the paper. Let $\G_h$ be a shape-regular partition of a given physical domain $\overline{D}$, where $h_E$ denotes the diameter of $E\in\G_h$ and $h=\max_{E\in\G_h}h_E$. We will also assume that there exists a positive constant $\gamma>0$ independent of $h>0$ such that $\max_{E\in\G_h}\frac{h_E}{\rho_E}\leq\gamma$, where $\rho_E$ is the supremum of the diameters of inscribed balls for an element $E\in\G_h$. For a given partition $\G_h$, the finite element spaces are defined by
$\V_h=\V(\G_h)\defeq \{V\in C(\overline{
D}):V_{|E}\in\hat{\mathbb{P}}_{\V},E\in \G_h\,\,\text{and}\,\,V_{|\partial D}=0\}$,
\begin{comment}
\begin{itemize}
\item {\bf{Affine equivalence}}: For each element $E\in \G_h$, there exists a non-singular affine mapping $\boldsymbol{F}_E:E\rightarrow\hat{E}$,
where $\hat{E}$ is the reference $d$-simplex in $\R^d$.
\item {\bf{Shape-regularity}}: For an element $E\in\G_h$, let $\rho_E$ be the supremum of the diameters of inscribed balls. Then there exists a positive constant $\gamma>0$ independent of $h>0$ such that $\max_{E\in\G_h}\frac{h_E}{\rho_E}\leq\gamma.$
\end{itemize}
\end{comment}
where $\hat{\mathbb{P}}_{\V}\subset W^{1,\infty}(\hat{E})$ is a finite-dimensional subspace. Here we assume that $\V_h$ has finite and locally supported basis; e.g., for each $h>0$, there exists $N_h\in\mathbb{N}$ such that $\V_h=\text{span}\{\phi^h_1,\ldots,\phi^h_{N_h}\}$ and for any basis function $\phi^h_i$, $i=1,\ldots,N_h$, we have that if $\phi^h_i\neq0$ on $E$ for some $E\in \G_h$, then $\text{supp}\,\phi^h_j\subset\bigcup\{E'\in G_h:E'\cap E\neq\emptyset\}=: S_E$. Furthermore, by shape regularity, it follows that $\exists C\in\mathbb{N}$ such that $ |S_E|\leq C|E|\,\,\,{\rm{for}}\,\,\,{\rm{all}}\,\,\,E\in\G_h$, where $C$ can be chosen to be independent of $h$. 

Next, we define our Galerkin approximation. We seek a discrete solution $u_h = \sum^{N_h}_{k=1}\alpha_k\phi_k\in\V_h$ satisfying 
\begin{equation}\label{var_for}
B[u_h,v_h]=\ell(v_h)\quad{\text{for all}}\,\,v_h\in \V_h.
\end{equation}
If we write $S=(S_{ij})_{1\leq i,\,j\leq N_h}$, $C=(C_{ij})_{1\leq i,\,j\leq N_h}\in\R^{N_h\times N_h}$, $F=(F_j)_{1\leq j\leq N_h}\in\R^{N_h}$ with
\begin{equation}\label{vec_form}
    S_{ij}=\int_D\left[ \boldsymbol{a}(x)\nabla\phi_i\cdot\nabla\phi_j+c(x)\,\phi_i\,\phi_j\right]\dx,\quad
    C_{ij}=\int_D ({\boldsymbol{b}}(x)\cdot\nabla\phi_i)\phi_j\dx,\quad
    F_j=\int_D f(x)\,\phi_j\dx,
\end{equation}
then the Galerkin approximation is equivalent to the linear algebraic equations
\begin{equation}\label{LAS}
    (S+C)\alpha=F,\quad {\rm{with}}\,\,\,\alpha=(\alpha_k)_{1\leq k\leq N_h}\in\R^{N_h}.
\end{equation}

Now, let us introduce the finite element operator network, proposed in \cite{FEONet}. As mentioned earlier, the input of the FEONet can be any type of PDE data, for example, external force, variable coefficient, or boundary condition. Here, as a prototype example, we shall consider the networks whose input is an external forcing term. However, this can be extended to different types of input data in a straightforward manner. For each external forcing term $f$, instead of computing the coefficients using \eqref{LAS}, we approximate the coefficients $\alpha$ using deep neural networks. For this purpose, we set the input of neural networks as external force $f$, parametrized by a random parameter $\om$ for the (possibly high-dimensional) probability space $(\Omega,\mathcal{T},\mathbb{P})$, assuming that $\Omega$ is compact. Typical examples are the Gaussian random fields or the random forcing terms defined by $f(x,\om)=\om_1\sin(2\pi\om_2x)+\om_3\cos(2\pi\om_4x)$,
where $\om=(\om_1,\om_2,\om_3,\om_4)$ is i.i.d. distributed uniformly with $\omega_j \in [0,1]$. In the present paper, we shall interpret $f(x,\om)$ as a Bochner-type function defined on $D\times\Omega$, and we will assume the following throughout the paper.

\begin{assumption}\label{f_ass}
    If we use the $(P \ell)$-finite element approximation (piecewise polynomial function of degree less than or equal to $\ell\in\mathbb{N}$), we assume that $f(\cdot,\cdot)\in C(\Omega;H^{\ell-1}(D))$.
\end{assumption}
Once this input feature $\om\in\Omega$ passes through the deep neural network, the coefficients $\{\widehat{\alpha}_k\}$ are generated as an output. We then reconstruct the solution by
\begin{equation}\label{sol_recon}
\widehat{u}_h(x,\om)=\sum_{k=1}^{N_h}\widehat{\alpha}_k(\om)\phi_k(x).
\end{equation}
For the training of the neural network, we set the population loss function as the $L^2$-residual of the variational formulation \eqref{var_for}: 
\begin{equation}\label{pop_loss}
    \LL(\alpha)=\mathbb{E}_{\om\sim\mathbb{P}_{\Omega}}\bigg[\sum^{N_h}_{k=1}|B[\widehat{u}_h(x,\om),\phi_k(x)]-\ell(\phi_k(x))  |^2\bigg]^{\frac{1}{2}}.
\end{equation}
For the computational efficiency, when we actually compute the approximate solution, we deal with the empirical loss function which is the Monte--Carlo integration of \eqref{pop_loss}:
\begin{equation}\label{emp_loss}
    \LL^M(\alpha)=\frac{|\Omega|}{M}\sum^M_{j=1}\bigg[\sum^{N_h}_{k=1}|B[\widehat{u}_h(x,\om_j),\phi_k(x)]-\ell(\phi_k(x))|^2\bigg]^{\frac{1}{2}},
\end{equation}
where $\{\om_j\}_{j=1}^{M}$ is a family of i.i.d. random samples selected from $\mathbb{P}_{\Omega}$. For each training epoch, the parameters of the neural networks are updated toward minimizing the empirical loss $\LL^M$, and then the given forcing term goes through the neural network again to predict more accurate coefficients. This process is repeated until we achieve a sufficiently small loss, and then we finally obtain the solution prediction \eqref{sol_recon}. It is noteworthy that the FEONet only uses sample parameters randomly selected from $\Omega$ for the training, and hence it can be trained without any precomputed pairs of input-output training data. Moreover, as our method is based on the basis expansion \eqref{sol_recon}, one can impose exact boundary values to the numerical solutions as we can do in the classical FEM. A schematic illustration of the decomposition of the FEONet is presented in Figure \ref{figure:scheme}. Various numerical experiments on several benchmark problems can be found in \cite{FEONet}, where we can confirm that our approach exhibited excellent performance, proving its versatility in terms of accuracy, generalization, and computational flexibility.
\begin{figure}[t]
\begin{center}
\includegraphics[width=0.8\textwidth ]{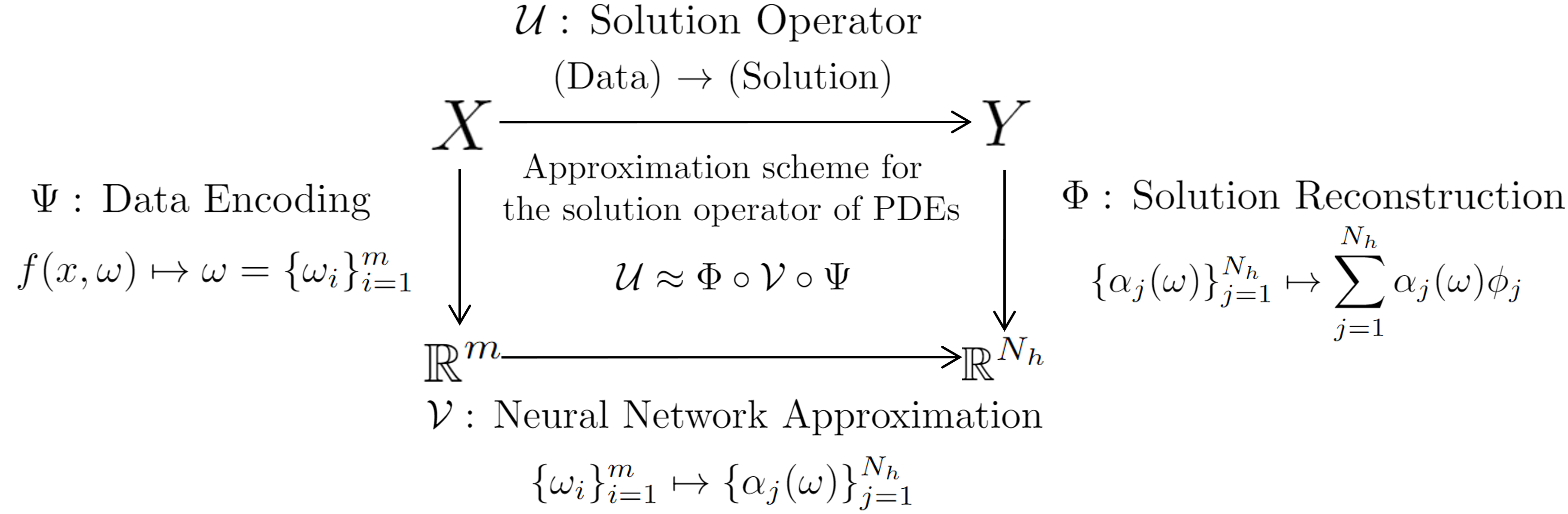}
\caption{Schematic illustration of the decomposition of the FEONet into the encoder $\Psi$, approximator $\mathcal{N}$ and reconstructor $\Phi$.}
\label{figure:scheme}
\end{center}
\end{figure}

\subsection{Estimations for eigenvalues of finite element matrices}\label{subsec:eigen_est}
As our FEONet approach is based on the finite element method, it is reasonable to expect that similar mathematical issues for the FEM may arise in the analysis of the FEONet. In fact, it will be made clear in the later analysis that the convergence of the FEONet is related to the condition number of the finite element matrices. In the classical FEM, adaptive or misshaped meshes may have an impact on the conditioning for the finite element approximation. This has a strong influence on the convergence of the approximation and the performance of iterative solvers for the resulting linear algebraic systems. For this reason, a condition number of finite element matrices has had a long history and has been of particular interest both theoretically and practically. Regarding this topic, a number of estimates of the largest and the smallest eigenvalues are available for both isotropic and anisotropic mesh geometries, and some conditioning techniques have been established in various directions
\cite{BS_book, eigen_3, eigen_7, eigen_4, small_eigen_1}.
In the present paper, the discussion on such matters in full generality is out of scope, and we will only focus on the case of shape-regular partitions as we defined in the previous section. Therefore, the results we shall present here are not new and they can be regarded as special cases of the aforementioned results. However, for the sake of completeness, we will briefly present the derivation of the estimates in this section. These results will be used in the convergence analysis of the FEONet in later sections.

Let $A=S+C$ where the matrices $S$ and $C$ are defined in \eqref{vec_form}. For the estimation of the largest eigenvalue, we essentially follow the idea presented in \cite{BS_book}. Note here that since we are considering the shape-regular meshes, we may identify for all $E\in\mathcal{G}_h$ that $h_E\simeq h$, and $h^d\simeq N_h^{-1}$. By the inverse inequality (e.g. Eq. (1.5) in \cite{BS_book}), we have
\begin{align*}
\alpha^TA\alpha&=B[u_h,u_h]\lesssim\|u_h\|^2_{H^1(D)}=\sum_{E\in\mathcal{G}_h}\|u_h\|^2_{H^1(E)}\lesssim\sum_{E\in\mathcal{G}_h}h^{d-2}_E\|u_h\|^2_{L^{\infty}(E)}\\
&\lesssim h^{d-2}\sum_{E\in\mathcal{G}_h}\sum_{{\rm{supp}}(\phi_i)\cap E\neq\emptyset }\alpha^2_i\lesssim h^{d-2}\alpha^T\alpha.
\end{align*}
This implies that 
\begin{equation}\label{max_eigen_1}
\lambda_{\max}\lesssim h^{d-2}\simeq N_h^{-1+\frac{2}{d}},    
\end{equation}
where $\lambda_{\max}$ denotes the largest eigenvalue of $A$. Next, we shall prove that $N_h^{-1}$ is the lower bound of the smallest eigenvalue. By Poincar\'e's inequality and the inverse inequality, we have
\begin{align*}
    \alpha^TA\alpha
    &=B[u_h,u_h]\gtrsim \|u_h\|^2_{H^1(D)}\gtrsim\|u_h\|^2_{L^2(D)}=\sum_{E\in\mathcal{G}_h}\|u_h\|^2_{L^2(E)}\\
    &\gtrsim h^d\sum_{E\in\mathcal{G}_h}\|u_h\|^2_{L^{\infty}(E)}\gtrsim h^d \sum_{E\in\mathcal{G}_h}\sum_{{\rm{supp}}(\phi_i)\cap E\neq\emptyset}\alpha^2_i\gtrsim h^d\alpha^T\alpha.
\end{align*}
Therefore, we obtain
\begin{equation}\label{min_eigen}
    \lambda_{\min}\gtrsim h^d\simeq N_h^{-1},   
\end{equation}
where $\lambda_{\min}$ denotes the smallest eigenvalue of $A$.

Now let us make some comments on the estimate for the condition number of $A$. If the $\ell^2$ norm is used to define the condition number, we can see that
\begin{equation}\label{cond_sing}
    \kappa(A)=\frac{\sigma_{\max}}{\sigma_{\min}},
\end{equation}
where $\sigma_{\max}$ and $\sigma_{\min}$ are maximal and minimal singular values of $A$ respectively. Furthermore, if the matrix $A$ is normal (i.e., it commutes its conjugate transpose), it is known that (see, e.g., \cite{mat_anal}) the condition number $\kappa(A)$ of $A$ satisfies
\begin{equation}\label{cond_for}
\kappa(A)=\frac{|\lambda_{\max}|}{|\lambda_{\min|}}.
\end{equation}
Therefore, if $A$ is normal, together with the estimates \eqref{max_eigen_1} and \eqref{min_eigen}, we may conclude that
\begin{equation}\label{cond_est}
\kappa(A)\lesssim N_h^{\frac{2}{d}}\simeq h^{-2},    
\end{equation}
where $N_h$ is the degree of freedom in the finite element formulation. On the other hand, in \cite{cond_lower}, the result for the lower bound of the condition number was obtained. More precisely, the authors proved the following estimate owing to the shape-regularity of the triangulation $\{\mathcal{G}_h\}_{h>0}$:
\begin{equation}\label{cond_lower_bound}
    h^{-2}\lesssim \kappa(A).
\end{equation}
From \eqref{cond_lower_bound}, \eqref{max_eigen_1} and \eqref{min_eigen}, we can derive
\begin{equation}\label{inv_min_eigen}
    \lambda_{\max}\lesssim\kappa(A)^{1-d/2}\quad{\rm{and}}\quad{\lambda_{\min}}^{-1}\lesssim \kappa(A)^{d/2},
\end{equation}
which we will use later. The estimates obtained in this section will be utilized in the following sections to verify the relationship between the condition number and the convergence of the FEONet.

\subsection{Feed forward neural networks}\label{subsec:nn}
Next, we shall define a class of feed-forward ReLU neural networks, which we will consider throughout the paper. For each $L\in\mathbb{N}$, we denote an $L$-layer ReLU neural network by a function $f^L(x):\R^{n_0}\rightarrow\R^{n_L}$, defined recursively by
\begin{equation}\label{nn_def}
    f^1(x)=W^1x+b^1 \quad{\rm{and}}\quad
    f^{\ell}(x)=W^{\ell}\sigma(f^{\ell-1}(x))+b^{\ell}\,\,\,{\rm{for}}\,\,2\leq\ell\leq L,
\end{equation}
where $W^{\ell}\in\R^{n_{\ell}\times n_{\ell-1}}$ and $b^{\ell}\in\R^{n_{\ell}}$ is the weight matrix and the bias vector respectively for the $\ell$-th layer. Here, $\sigma:\R\rightarrow\R$ is the ReLU activation function defined by $\sigma(x)=\max\{0,x\}$, and $\sigma(x)$ signifies the vector $(\sigma(x_1),\cdots,\sigma(x_k))$ for $x=(x_1,\cdots,x_k)$. We denote the architecture of a given network by the vector ${\vec{\boldsymbol{n}}}=(n_0,\cdots,n_L)$, the family of neural network parameters by $\theta:=\theta_{\vec{\boldsymbol{n}}}=\{(W^1,b^1),\cdots,(W^L,b^L)\}$, and its realization as a function by $\mathcal{R}[\theta](x)$. For a given neural network architecture $\vec{\boldsymbol{n}}$, the collection of all possible parameters is defined by
\begin{equation}\label{nn_para_set}
    \Theta_{\vec{\boldsymbol{n}}}=\left\{\{(W^{\ell},b^{\ell})\}^L_{\ell=1}:W^{\ell}\in\R^{n_{\ell}\times n_{\ell-1}},\,b^{\ell}\in\R^{n_{\ell}}\right\}.
\end{equation}
  For two feed-forward neural networks $\theta_i$, $i=1,2$ with architectures  $\vec{\boldsymbol{n}}_i=\left(n^{(i)}_0,\cdots,n^{(i)}_{L_i}\right)$, we shall write $\vec{\boldsymbol{n}}_1\subset\vec{\boldsymbol{n}}_2$ if for any $\theta_1\in\Theta_{\vec{\boldsymbol{n}}_1}$, there exists $\theta_2\in\Theta_{\vec{\boldsymbol{n}}_2}$ satisfying $\mathcal{R}[\theta_1](x)=\mathcal{R}[\theta_2](x)$ for all $x\in\R^{n_0}$.
 
 Now let us assume that there exists a sequence of neural network architectures $\{\vec{\boldsymbol{n}}_n\}_{n\geq1}$ satisfying $\vec{\boldsymbol{n}}_n\subset \vec{\boldsymbol{n}}_{n+1}$ for all $n\in\mathbb{N}$. We define the corresponding family of neural networks by
\begin{equation}\label{nn_class}
    \NN_n=\{\mathcal{R}[\theta]:\theta\in\Theta_{\vec{\boldsymbol{n}}_n}\}.
\end{equation}
It is straightforward to verify that $\NN_n\subset\NN_{n+1}$ for any $n\in\mathbb{N}$. We shall exploit the following theorem in the later analysis.
\begin{theorem}\label{ass_1}
Let $K$ be a compact set in $\R^{m}$ and assume that $g\in C(K,\R^{N})$. Then there holds
\begin{equation}\label{UAT}
    \lim_{n\rightarrow\infty}\inf_{\hat{g}\in\NN_n}\|\hat{g}-g\|_{C(K)}=0.
\end{equation}
\end{theorem}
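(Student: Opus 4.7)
The plan is to reduce the statement to the classical universal approximation theorem for ReLU networks and then exploit the monotonicity $\NN_n \subset \NN_{n+1}$. The key reformulation is that since the sequence of infima $a_n := \inf_{\hat g \in \NN_n}\|\hat g - g\|_{C(K)}$ is non-increasing (because any realization in $\NN_n$ is also in $\NN_{n+1}$ by the definition of $\subset$ applied to the architectures), it suffices to show that for every $\varepsilon > 0$ there is some $n_0$ and some $\hat g \in \NN_{n_0}$ with $\|\hat g - g\|_{C(K)} < \varepsilon$. This recasts the limit statement as a purely existential question about approximability by networks of \emph{some} architecture in the prescribed nested family.

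First I would reduce the vector-valued problem to the scalar case. Write $g = (g_1,\ldots,g_N)$ with $g_i \in C(K,\mathbb R)$. For each component and each $\varepsilon > 0$, the classical universal approximation theorem for ReLU networks (for instance the Leshno--Lin--Pinkus--Schocken result, valid for any non-polynomial activation, specialized to ReLU) produces a scalar ReLU network $\hat g_i$ of some finite architecture $\vec{\boldsymbol m}_i$ satisfying $\|\hat g_i - g_i\|_{C(K)} < \varepsilon/\sqrt{N}$. I would then assemble the $\hat g_i$ into a single vector-valued network $\hat g$ by placing the weight matrices block-diagonally in each hidden layer (padding shallower subnetworks with identity-like ReLU blocks $x \mapsto \sigma(x) - \sigma(-x)$ to equalize depths). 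The resulting network has a well-defined architecture $\vec{\boldsymbol m}$ and realizes $\hat g = (\hat g_1,\ldots,\hat g_N)$ with $\|\hat g - g\|_{C(K)} < \varepsilon$ in the standard sup-norm on $C(K,\R^N)$.

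Finally, since the nested sequence $\{\vec{\boldsymbol n}_n\}$ is assumed to exhaust all finite architectures in the sense that every fixed architecture is eventually dominated (this is the implicit richness condition the statement requires of the sequence), there exists $n_0$ with $\vec{\boldsymbol m} \subset \vec{\boldsymbol n}_{n_0}$; by the definition of $\subset$ in \eqref{nn_para_set}, the realization $\hat g$ belongs to $\NN_{n_0}$. Hence $a_{n_0} < \varepsilon$, and by monotonicity $a_n < \varepsilon$ for all $n \geq n_0$. Since $\varepsilon$ is arbitrary, $a_n \to 0$.

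The main obstacle, rather than the analytic content, is the bookkeeping: one must verify that the parallel composition used to pass from scalar to vector-valued approximation produces a genuine feed-forward architecture of the form \eqref{nn_def} with width/depth controlled in terms of the $\vec{\boldsymbol m}_i$, and that the resulting architecture is indeed captured by some $\vec{\boldsymbol n}_{n_0}$ in the chosen sequence. The depth equalization via identity-simulating ReLU blocks is the small technical point; everything else is either the cited universal approximation theorem or an immediate consequence of the monotonicity of $\NN_n$.
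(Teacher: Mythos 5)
Your proposal is correct as far as it goes, but it takes a different route from the paper for the simple reason that the paper does not prove this statement at all: Theorem \ref{ass_1} is presented as a known result (the universal approximation theorem), with the two-layer case attributed to the classical references and the deep, bounded-width case to a more recent paper. What you do instead is reconstruct a proof from the scalar Leshno--Lin--Pinkus--Schocken theorem via block-diagonal parallel composition and depth equalization, plus the monotonicity of $a_n = \inf_{\hat g\in\NN_n}\|\hat g-g\|_{C(K)}$. Two remarks. First, you are right that the statement as written is vacuous without some richness hypothesis on the architecture sequence $\{\vec{\boldsymbol n}_n\}$ (a constant trivial sequence satisfies $\vec{\boldsymbol n}_n\subset\vec{\boldsymbol n}_{n+1}$ and falsifies \eqref{UAT}); the paper leaves this implicit and only supplies it through its two cited examples, so making the exhaustion condition explicit is a genuine improvement in precision. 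Second, and this is the one substantive limitation of your argument: your richness condition (``every finite architecture is eventually dominated by some $\vec{\boldsymbol n}_{n_0}$'') covers the growing-width shallow family, where the block-diagonal assembly of $N$ scalar two-layer networks is again a two-layer network of the summed width, but it \emph{fails} for the second family the paper invokes, namely networks of arbitrary depth with width fixed at $m+N+2$. There your parallel composition produces a network wider than $m+N+2$, which no member of that family dominates, so for that case one genuinely needs the dedicated deep-narrow approximation theorem rather than assembly from scalar approximants. Your proof is therefore a valid proof of the theorem under an explicit sufficient condition on the architecture sequence, but it does not recover the full range of instances the paper claims by citation.
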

It is called the universal approximation theorem and is known to hold for various scenarios. For instance, the original form of the universal approximation theorem for two-layer neural networks ($L=2$) with an arbitrary number of nodes $n$ exactly coincides with the above theorem \cite{UA_1, UA_2}. On the other hand, in the recent paper \cite{terry_lion}, the authors addressed an extension on the networks of arbitrary depth with bounded width. The authors assumed that the activation function $\sigma$ is non-affine and continuously differentiable at some points, with non-vanishing derivatives at these points. For this case, the authors considered the class of neural networks with an arbitrary number of layers with $m+N+2$ neurons for each, and derived \eqref{UAT}.

\subsection{Analytic framework}
In this section, we shall describe the analytic framework which we will work within. Throughout the paper, given that the external force $f(x,\om)$ parametrized by $\om\in\Omega$, $u(x,\om)$ signifies the corresponding solution, and $u_h(x,\om)$ denotes the finite element approximation 
\begin{equation}\label{for_0}
    u_h(x,\om)=\sum_{k=1}^{N_h}\alpha^*_k(\om)\phi_k(x),    
\end{equation}
where $\alpha^*(\om)=\{\alpha^*_k(\om)\}_{k=1}^{N_h}$ is the set of target finite element coefficients obtained from \eqref{LAS}.

\begin{comment}
Note that both the solution $u$ and the approximation $u_h$ depend on the parametric variable $\om\in\Omega$ and $\alpha^*(\om)$ can be interpreted as a function contained in $C(\Omega,\R^{N_h})$ provided that $f(x,\om)$ satisfies \eqref{f_ass}. In this perspective, a key observation is that $\alpha^*$ is the solution to the minimization problem which is equivalent to \eqref{gal_approx}:
\begin{equation}\label{for_1}
    \alpha^*=\argmin_{\alpha\in C(\Omega,\R^{N_h})} \LL(\alpha),
\end{equation}
where $\LL$ is defined in \eqref{pop_loss}.
\end{comment}

Next, we consider the approximation of the coefficients $\alpha^*$ by $\R^{N_h}$-valued ReLU neural networks. We seek for $\widehat{\alpha}^{\LL}_n:\Omega\rightarrow\R^{N_h}$ solving the minimization problem
\begin{equation}\label{for_2}
    \widehat{\alpha}^{\LL}_n=\argmin_{\alpha\in\NN_n}\LL(\alpha),
\end{equation}
where the minimization is over the neural networks class $\NN_n$. Then we denote the associated solution by
\begin{equation}\label{for_3}
    u_{h,n}(x,\om)=\sum^{N_h}_{k=1}(\widehat{\alpha}^{\LL}_{n})_k(\om)\phi_k(x).
\end{equation}

Finally, we shall consider the solution for the discrete minimization over the family of neural networks
\begin{equation}\label{for_4}
    \widehat{\alpha}^{\LL}_{n,M}=\argmin_{\alpha\in\NN_n}\LL^M(\alpha),
\end{equation}
where the empirical loss $\LL^M$ was defined in \eqref{emp_loss}. In addition, we write the corresponding solution as
\begin{equation}\label{for_5}
    u_{h,n,M}(x,\om)=\sum^{N_h}_{k=1}(\widehat{\alpha}^{\LL}_{n,M})_k(\om)\phi_k(x).
\end{equation}

In the present paper, we shall assume that the optimization error is negligible and that we can always find the exact minimizer for the optimization problems \eqref{for_2} and \eqref{for_4}. Therefore, we shall regard $\widehat{\alpha}_{n,M}^{\LL}$ as the neural network approximation for the target coefficients $\alpha^*$, and $u_{h,n,M}$ is our solution prediction computed by our FEONet scheme. 

The main objective of this paper is to investigate the error $\|u-u_{h,n,M}\|_{L^1(\Omega;L^2(D))}$. For this purpose, we split it into three parts:
\begin{equation}\label{error_split}
    u-u_{h,n,M}=
    \underbrace{(u-u_{h})}_{\text{FEM error}}+
    \underbrace{(u_{h}-u_{h,n})}_{\text{ 
 approximation error}}+
    \underbrace{(u_{h,n}-u_{h,n,M})}_{\text{
 generalization error}}
    =:{\rm{(I)}}+{\rm{(II)}}+{\rm{(III)}}.
\end{equation}
The first error ${\rm{(I)}}$ arises from the finite element approximation and its mathematical analysis is fairly well-known. In particular, under suitable assumptions, it is known that ${\rm{(I)}}$ converges to $0$ as $h\rightarrow0$, and the convergence rate improves as we approximate the solution with higher-order polynomials assuming that the solution has suitable regularity properties. The second error ${\rm{(II)}}$ is referred to as the approximation error, occurring when the true coefficient $\alpha^*$ in \eqref{for_0} is approximated by a deep neural network. The final error ${\rm{(III)}}$ is known as the generalization error, which measures how well our predicted solution $u_{h,n,M}$ trained with $M$ random samples generalizes for other samples that were not used in the training process; in other words, it is the error mainly caused by the approximation of $\LL$ by $\LL^M$.

In the previous works \cite{ULG_anal, FEONet} where similar schemes were analyzed, all the convergence results were studied only when the index for the baseline numerical method (here denoted by $h>0$) was fixed. Furthermore, the effect of $h>0$ (or $N\in\mathbb{N}$ in \cite{ULG_anal} where the Legendre--Galerkin method was considered) to the errors ${\rm{(II)}}$ and ${\rm{(III)}}$ was not investigated in the previous analysis. In the present paper, however, we will derive a comprehensive error estimate, simultaneously examining the errors ${\rm{(I)}}$, ${\rm{(II)}}$, and ${\rm{(III)}}$ without fixing $h>0$, and analyzing the role of the indices $h>0$, $n$, $M\in\mathbb{N}$ in each error in \eqref{error_split}.

As will be made clear in the later analysis, the convergence of the errors ${\rm{(II)}}$ and ${\rm{(III)}}$ is closely related to the condition number of the finite element matrices defined in \eqref{vec_form}. Based on the condition number estimates derived in Section \ref{subsec:eigen_est}, we will rigorously investigate the role of $h>0$ on the approximation and generalization errors, which is a key novel part of the paper compared to the previous work \cite{ULG_anal, FEONet}. 
\begin{comment}
As a first step, we will conduct the convergence analysis for the corresponding neural network coefficients; in other words, we shall prove the inequality \begin{equation}\label{nn_error}
    \|\alpha^*-\widehat{\alpha}^{\LL}_n\|_{L^1(\Omega)} + \|\widehat{\alpha}^{\LL}_n-\widehat{\alpha}^{\LL}_{n,M}\|_{L^1(\Omega)}\lesssim C(h,n,M).
\end{equation}
These error estimates for the neural network coefficients are crucial in our analysis and will be discussed separately in the next section.
\end{comment}

\section{Convergence analysis for approximate solutions}\label{sec:conv}
We start with the following observation on the loss functions defined in \eqref{pop_loss} and \eqref{emp_loss}. By the definition of the loss functions, the solution representation \eqref{sol_recon} and the bilinearity of $B[\cdot,\cdot]$, we note that
\begin{equation}\label{obs_J}
\begin{aligned}
    \LL(\alpha)
    \begin{comment}
    &=\int_{\Omega}\left[\sum^{N_h}_{i=1}\bigg|B\bigg[\sum^{N_h}_{k=1}\alpha_k(\om)\phi_k(x),\phi_k(x)\bigg]-l(\phi_i(x);\om)\bigg|^2\right]^{\frac{1}{2}}\dom\\
    \end{comment}
    &=\int_{\Omega}\left[\sum^{N_h}_{i=1}\bigg|\sum^{N_h}_{k=1}\alpha_k(\om)B[\phi_k(x),\phi_i(x)]-\ell(\phi_i(x)
    )\bigg|^2\right]^{\frac{1}{2}}\dom\\
    &=\int_{\Omega}\left[\sum^{N_h}_{i=1}\bigg|(A\alpha(\om))_i-(F(\om))_i\bigg|^2\right]^{\frac{1}{2}}\dom=\norm{A\alpha(\om)-F(\om)}_{L^1(\Omega)},
\end{aligned}
\end{equation}
where $A$ and $F$ are defined in \eqref{vec_form}. In the same manner, by the definition of population loss, we have
\begin{equation}\label{obs_JM}
    \LL^M(\alpha)=\frac{|\Omega|}{M}\sum_{j=1}^M|A\alpha(\om_j)-F(\om_j)|,
\end{equation}
where each $\om_j$ is randomly chosen according to the distribution of $\mathbb{P}_{\Omega}$. With the aid of the above observation, the loss functions $\LL$ and $\LL^M$ are represented in terms of the finite element matrices, and the analysis has now become a matter of the properties of these matrices. It is noteworthy that the matrix $A$ contains the information of the given PDE and boundary conditions, and thus the characterization 
of $A$ which can cover various PDE settings are of importance. In this perspective, the following lemma is useful in the analysis of the FEONet scheme regarding the structure of $A$, which is a direct consequence of the spectral theorem (see, e.g., Proposition 3.1 in \cite{ULG_anal}).
\begin{lemma}\label{matrix_thm}
Suppose that $T$ is a $N\times N$ symmetric and positive-definite matrix and let us denote minimum and maximum eigenvalues of $T$ by $\lambda_{\min}$ and $\lambda_{\max}$ respectively. Then for any $x\in\R^N$, we have
\begin{equation}\label{eigen_est}
    \lambda_{\min}|x|\leq|Tx|\leq\lambda_{\max}|x|.
\end{equation}
\end{lemma}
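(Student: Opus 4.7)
The statement is a standard consequence of the spectral theorem, and the proof plan is essentially bookkeeping once we invoke it. I would proceed as follows.

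Since $T$ is real, symmetric, and positive-definite, the spectral theorem guarantees an orthogonal matrix $Q \in \R^{N \times N}$ (so $Q^T Q = Q Q^T = I$) and a diagonal matrix $D = \mathrm{diag}(\lambda_1, \ldots, \lambda_N)$, with $0 < \lambda_{\min} = \lambda_1 \leq \cdots \leq \lambda_N = \lambda_{\max}$, such that $T = Q^T D Q$. The first step of the proof is simply to record this decomposition.

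Next, for an arbitrary $x \in \R^N$, I would introduce the change of variables $y = Q x \in \R^N$. Because $Q$ is orthogonal, we have $|y| = |x|$. Using symmetry of $T$ we can write
\[
|T x|^2 \;=\; x^T T^T T x \;=\; x^T T^2 x \;=\; x^T Q^T D^2 Q x \;=\; y^T D^2 y \;=\; \sum_{i=1}^{N} \lambda_i^2 \, y_i^2.
\]
Bounding each $\lambda_i^2$ between $\lambda_{\min}^2$ and $\lambda_{\max}^2$ (valid since all $\lambda_i > 0$) yields
\[
\lambda_{\min}^2 \sum_{i=1}^{N} y_i^2 \;\leq\; |T x|^2 \;\leq\; \lambda_{\max}^2 \sum_{i=1}^{N} y_i^2,
\]
i.e., $\lambda_{\min}^2 |y|^2 \leq |Tx|^2 \leq \lambda_{\max}^2 |y|^2$. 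Taking square roots and substituting $|y| = |x|$ gives the desired two-sided bound.

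There is no real obstacle in this argument; the only thing to be careful about is ensuring that the eigenvalues are positive (so that taking square roots preserves the inequalities without sign concerns), which is given by positive-definiteness. The lemma could equivalently be stated via the Rayleigh quotient bound applied to $T^2$, but the direct diagonalization above is the cleanest exposition and matches the reference to the spectral theorem in the excerpt.
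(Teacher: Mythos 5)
Your proof is correct and follows exactly the route the paper intends: the paper gives no written proof, simply citing the spectral theorem, and your diagonalization argument ($T = Q^T D Q$, $|Tx|^2 = \sum_i \lambda_i^2 y_i^2$ with $y = Qx$) is the standard way of filling in that citation. No issues.
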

In this section, we shall use the above lemma with the matrix $T=A^TA$, which is symmetric. Furthermore, by coercivity of $B[\cdot,\cdot]$, 
\begin{comment}
we have for any nonzero vector $y\in\R^{N_h}$,
\begin{equation}\label{coerc_PD}
    y\cdot Ay=\sum^{N_h}_{i=1}\sum^{N_h}_{j=1}y_iy_jB[\phi_i,\phi_j]=B\bigg[\sum^{N_h}_{i=1}y_i\phi_i,\sum^{N_h}_{j=1}y_j\phi_j\bigg]\gtrsim\bigg\|\sum^{N_h}_{j=1}y_j\phi_j\bigg\|^2_{H^1}>0,
\end{equation}
\end{comment}
we find that $A$ is positive-definite, and hence, $A^TA$ is also positive-definite. Therefore, even though $A$ is not symmetric, we can apply Lemma \ref{matrix_thm} with $T=A^TA$. Based on this fact, in this section, we will derive estimates for the approximation error and the generalization error for the approximate coefficients $\|\alpha^*-\widehat{\alpha}^{\LL}_n\|_{L^1(\Omega)}$ and $\|\widehat{\alpha}^{\LL}_n-\widehat{\alpha}^{\LL}_{n,M}\|_{L^1(\Omega)}$ respectively. Before proceeding further, we introduce modified loss functions defined by
\[
    \HH(\alpha)=\|A^TA\alpha(\om)-A^TF(\om)\|_{L^1(\Omega)}\quad{\rm{and}}\quad \HH^M(\alpha)=\frac{|\Omega|}{M}\sum_{i=1}^M|A^TA\alpha(\om_i)-A^T F(\om_i
    )|,
\]
and their minimizers over the class of neural networks
\[
    \widehat{\alpha}^{\HH}_n=\argmin_{\alpha\in\NN_n}\HH(\alpha)\quad{\rm{and}}\quad \widehat{\alpha}^{\HH}_{n,M}=\argmin_{\alpha\in\NN_n}\HH^M(\alpha).
\]

\subsection{Approximation error and Ce\'a's lemma}
In this section, we aim to analyze the approximation error for the approximate coefficients. By Assumption \ref{f_ass}, it is easy to verify that $F\in C(\Omega;\R^{N_h})$ and hence $\alpha^*=A^{-1}F\in C(\Omega;\R^{N_h})$. The precise statement for the approximation error estimate is encapsulated in the following theorem, which is a version of Ce\'a's lemma for the neural network approximation of the FEONet.
\begin{theorem}[Approximation error]\label{approx_error}
    Suppose that Assumption \ref{f_ass} holds and let $\kappa(A)$ denote the condition number of the finite element matrix $A=S+C$ defined in \eqref{vec_form}. For the finite element coefficients $\alpha^*\in C(\Omega;\R^{N_h})$ and the approximate coefficient $\widehat{\alpha}^{\LL}_n\in\NN_n$, we have
    \begin{equation}\label{approx_est}
        \|\alpha^*-\widehat{\alpha}^{\LL}_n\|_{L^1(\Omega)}\lesssim \kappa(A)^2\inf_{\alpha\in\NN_n}\|\alpha-\alpha^*\|_{L^1(\Omega)}.
    \end{equation}
\end{theorem}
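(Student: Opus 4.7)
The plan is to follow the template of C\'ea's lemma: the optimality of $\widehat{\alpha}^{\LL}_n$ over $\NN_n$ allows us to compare it against an arbitrary competitor $\alpha\in\NN_n$, and then we pass to the infimum. The reformulation \eqref{obs_J} rewrites $\LL(\alpha)$ as $\|A\alpha(\om)-F(\om)\|_{L^1(\Omega)}$, and since $F=A\alpha^*$ by \eqref{LAS}, the optimality of $\widehat{\alpha}^{\LL}_n$ yields, for every $\alpha\in\NN_n$,
\begin{equation*}
\|A(\widehat{\alpha}^{\LL}_n-\alpha^*)\|_{L^1(\Omega)}=\LL(\widehat{\alpha}^{\LL}_n)\leq\LL(\alpha)=\|A(\alpha-\alpha^*)\|_{L^1(\Omega)}.
\end{equation*}
The remaining task is therefore purely linear algebraic: translate the $A$-weighted norms on both sides back into norms on the coefficient functions themselves.

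The upper translation is immediate, since $|Ay|\leq\sigma_{\max}(A)|y|$ pointwise in $\om$ gives $\|A(\alpha-\alpha^*)\|_{L^1(\Omega)}\leq\sigma_{\max}(A)\|\alpha-\alpha^*\|_{L^1(\Omega)}$. The lower translation is the delicate step, because $A=S+C$ is not symmetric (the convection matrix $C$ need not be), and so Lemma \ref{matrix_thm} cannot be applied to $A$ directly. Instead, I apply it to the symmetric positive-definite matrix $T=A^TA$, whose minimum eigenvalue equals $\sigma_{\min}(A)^2$, and then combine it with the operator-norm bound $|A^TAx|\leq\sigma_{\max}(A)|Ax|$ to obtain
\begin{equation*}
|x|\leq\frac{1}{\sigma_{\min}(A)^2}|A^TAx|\leq\frac{\sigma_{\max}(A)}{\sigma_{\min}(A)^2}|Ax|\qquad\text{for every }x\in\R^{N_h}.
\end{equation*}
Taking $x=\widehat{\alpha}^{\LL}_n(\om)-\alpha^*(\om)$, integrating in $\om$, chaining with the two previous inequalities, and using \eqref{cond_sing} produces the prefactor $\sigma_{\max}(A)^2/\sigma_{\min}(A)^2=\kappa(A)^2$. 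Passing to the infimum over $\alpha\in\NN_n$ then proves \eqref{approx_est}.

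The main obstacle is precisely the non-symmetry of $A$: had $A$ been symmetric, a direct application of Lemma \ref{matrix_thm} would have produced only a factor $\kappa(A)$, but routing the lower bound through $A^TA$ costs an extra factor $\sigma_{\max}(A)/\sigma_{\min}(A)=\kappa(A)$, which is exactly how the squared condition number appears in \eqref{approx_est}. Apart from this, the only point worth verifying is that the right-hand side of \eqref{approx_est} is meaningful, i.e.\ that $\alpha^*=A^{-1}F\in C(\Omega;\R^{N_h})$ so that $\NN_n$ can meaningfully approximate it; this is immediate from Assumption \ref{f_ass} together with the invertibility of $A$, and combined with Theorem \ref{ass_1} it further guarantees that the right-hand side tends to $0$ as $n\to\infty$.
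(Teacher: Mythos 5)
Your proof is correct, and it takes a genuinely more direct route than the paper's. The paper does not argue from the minimality of $\widehat{\alpha}^{\LL}_n$ alone: it introduces the auxiliary normal-equation loss $\HH(\alpha)=\|A^TA\alpha-A^TF\|_{L^1(\Omega)}$ and its minimizer $\widehat{\alpha}^{\HH}_n$, splits $\|\alpha^*-\widehat{\alpha}^{\LL}_n\|_{L^1(\Omega)}\leq\|\alpha^*-\widehat{\alpha}^{\HH}_n\|_{L^1(\Omega)}+\|\widehat{\alpha}^{\HH}_n-\widehat{\alpha}^{\LL}_n\|_{L^1(\Omega)}$, and estimates each piece separately via Lemma \ref{matrix_thm} applied to $T=A^TA$; this machinery is set up largely because the same objects $\HH$, $\HH^M$, $\widehat{\alpha}^{\HH}_n$, $\widehat{\alpha}^{\HH}_{n,M}$ are reused in the generalization-error proof of Theorem \ref{coef_gen_err}. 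You instead run the classical C\'ea template directly on $\LL$: optimality gives $\|A(\widehat{\alpha}^{\LL}_n-\alpha^*)\|_{L^1(\Omega)}\leq\|A(\alpha-\alpha^*)\|_{L^1(\Omega)}$ for every competitor, and the whole problem reduces to the two pointwise bounds $|Ay|\leq\sigma_{\max}|y|$ and $|y|\leq\sigma_{\max}\sigma_{\min}^{-2}|Ay|$, yielding the cleaner constant $\kappa(A)^2$ with no auxiliary minimizer. One small conceptual correction: the squared condition number is not actually forced by the non-symmetry of $A$. The smallest singular value satisfies $\sigma_{\min}(A)=\min_{|y|=1}|Ay|$ for any invertible matrix (equivalently, apply the quadratic-form version of the spectral theorem to $A^TA$ rather than the statement of Lemma \ref{matrix_thm} about $|Ty|$), so $|y|\leq\sigma_{\min}^{-1}|Ay|$ holds directly and your argument would then give the sharper bound $\kappa(A)\inf_{\alpha\in\NN_n}\|\alpha-\alpha^*\|_{L^1(\Omega)}$, matching the self-adjoint case of Theorem \ref{symm_approx_error}. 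The extra factor of $\kappa(A)$ is an artifact of routing the lower bound through $|A^TAy|$; since $\kappa(A)\geq1$ this only weakens the estimate, so the stated inequality \eqref{approx_est} still follows and your proof stands.
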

\begin{proof}
By the triangle inequality, we can see that
\begin{equation}\label{split_approx_error}
    \|\alpha^*-\widehat{\alpha}^{\LL}_n\|_{L^1(\Omega)}\leq\|\alpha^*-\widehat{\alpha}^{\HH}_n\|_{L^1(\Omega)}+\|\widehat{\alpha}^{\HH}_n-\widehat{\alpha}^{\LL}_n\|_{L^1(\Omega)}.
\end{equation}
If we let $\rho_{\max}$ and $\rho_{\min}$ be maximum and minimum singular values of $A$ respectively, (i.e., the square root of the eigenvalue of $A^TA$), from the repetitive application of Lemma \ref{matrix_thm} with $T=A^TA$, we have
\begin{align*}
    \|\alpha^*-\widehat{\alpha}^{\HH}_n\|_{L^1(\Omega)}
    %&\leq\frac{1}{(\rho_{\rm{min}})^2}\|A^TA\alpha^*-A^TA\widehat{\alpha}^{\HH}_n\|_{L^1(\Omega)}\\
    &\leq\frac{1}{(\rho_{\rm{min}})^2}\left(\|A^TA\alpha^*-A^TF\|_{L^1(\Omega)}+\|A^TA\widehat{\alpha}^{\HH}_n-A^TF\|_{L^1(\Omega)}\right)\\
    &=\frac{1}{(\rho_{\rm{min}})^2}\HH(\widehat{\alpha}^{\HH}_n)\leq\frac{1}{(\rho_{\rm{min}})^2}\inf_{\alpha\in\NN_n}\HH(\alpha)=\frac{1}{(\rho_{\rm{min}})^2}\inf_{\alpha\in\NN_n}\|A^TA\alpha-A^TF\|_{L^1(\Omega)}\\
    &\leq\frac{1}{(\rho_{\rm{min}})^2}\inf_{\alpha\in\NN_n}\left(\|A^TA\alpha-A^TA\alpha^*\|_{L^1(\Omega)}+\|A^TA\alpha^*-A^TF\|_{L^1(\Omega)}\right)\\
    &\leq\left(\frac{\rho_{\rm{max}}}{\rho_{\rm{min}}}\right)^2\inf_{\alpha\in\NN_n}\|\alpha-\alpha^*\|_{L^1(\Omega)}
    =\kappa(A)^2\inf_{\alpha\in\NN_n}\|\alpha-\alpha^*\|_{L^1(\Omega)},
\end{align*}
where we have used the minimality of $\widehat{\alpha}^{\HH}_n$ and the fact that $A^TA\alpha^*=A^TF$. 

Next, let us estimate the second term on the right-hand side of \eqref{split_approx_error}. If we write the largest eigenvalue of $A$ as $\lambda_{\max}$, by using the facts that $\|A^T\|=\|A\|=\rho_{\max}$ and $|\lambda_{\max}|\leq\rho_{\max}$ (see, e.g., \cite{mat_anal}), we have
\begin{align*}
    \|\widehat{\alpha}^{\HH}_n-\widehat{\alpha}^{\LL}_n\|_{L^1(\Omega)}
    %&\leq\frac{1}{(\rho_{\min})^2}\|A^TA\widehat{\alpha}^{\HH}_n-A^TA\widehat{\alpha}^{\LL}_n\|_{L^1(\Omega)}\\
    &\leq\frac{1}{(\rho_{\min})^2}\left(\|A^TA\widehat{\alpha}^{\HH}_n-A^TF\|_{L^1(\Omega)}+\|A^TA\widehat{\alpha}^{\LL}_n-A^TF\|_{L^1(\Omega)}\right)\\
    &\leq\left(\frac{\rho_{\max}}{\rho_{\min}}\right)^2\inf_{\alpha\in\NN_n}\|\alpha-\alpha^*\|_{L^1(\Omega)}+\frac{\|A^T\|}{(\rho_{\min})^2}\|A\widehat{\alpha}^{\LL}_n-F\|_{L^1(\Omega)}\\
    &\leq\left(\frac{\rho_{\max}}{\rho_{\min}}\right)^2\inf_{\alpha\in\NN_n}\|\alpha-\alpha^*\|_{L^1(\Omega)}+\frac{\rho_{\max}}{(\rho_{\min})^2}\lambda_{\max}\inf_{\alpha\in\NN_n}\|\alpha-\alpha^*\|_{L^1(\Omega)}\\
    &\leq2\left(\frac{\rho_{\max}}{\rho_{\min}}\right)^2\inf_{\alpha\in\NN_n}\|\alpha-\alpha^*\|_{L^1(\Omega)}=2\kappa(A)^2\inf_{\alpha\in\NN_n}\|\alpha-\alpha^*\|_{L^1(\Omega)}.
\end{align*}
\end{proof}

One interesting point is that the convergence of the approximate coefficient depends on the condition number of $A$. The FEONet uses a basis expansion similar to the classical FEM, but the essential difference is that the FEM solves the algebraic equation $A\alpha=F$, while the FEONet seeks to find a minimizer of $|A\alpha-F|$ over the class of neural networks. In the traditional FEM, iterative methods are commonly used to solve the equation $A\alpha=F$ (e.g., Krylov subspace methods, conjugate gradient methods) and the convergence of iterative methods depends on the condition number, which eventually affects the overall computation of the solution in the FEM. Therefore, it is natural to suspect whether the condition number also plays a role in computing the solution prediction of the FEONet, and the above theorem quantitatively confirms this relationship. As in the traditional FEM, we have seen that a high condition number also hinders the computation of solutions by the FEONet.

Another intriguing point is that, for a fixed finite element parameter $h>0$, (and hence the condition number $\kappa(A)$), by the universal approximation theorem \eqref{UAT},  we can prove that
\begin{equation}\label{app_t_est}
\|\alpha^*-\widehat{\alpha}^{\LL}_n\|_{L^1(\Omega)}\rightarrow0\quad{\rm{as}}\,\,n\rightarrow \infty,
\end{equation}
which is the desired convergence property of our approximate coefficients.

\subsection{Generalization error}
For the generalization error, we begin with the following definition so-called {\textit{Rademacher complexity}}.

\begin{definition}
Let $\{X_i\}_{i=1}^M$ be a sequence of i.i.d. random variables. For a function class $\mathcal{F}$, we define the Rademacher complexity  by
\[
    \mathcal{R}_M(\mathcal{F})=\mathbb{E}_{\{X_i\}^M_{i=1}}\mathbb{E}_{\{\varepsilon_i\}^M_{i=1}}\bigg[\sup_{f\in\mathcal{F}}\bigg|\frac{1}{M}\sum^M_{i=1}\varepsilon_if(X_i)\bigg|\bigg],
\]
where $\varepsilon_i$'s denote i.i.d. Bernoulli random variables, which means that $\mathbb{P}(\varepsilon_i=1)=\mathbb{P}(\varepsilon_i=-1)=\frac{1}{2}$.
\end{definition}
As we can see from the above definition, the Rademacher complexity is the expectation value of the maximum correlation between the vector $(f(X_1),\cdots,f(X_M))$ and the random noise $(\varepsilon_1,\cdots,\varepsilon_M)$, where the maximum is taken over the family of functions $\mathcal{F}$. This measures the capability of the class $\mathcal{F}$ to fit random noise. For comprehensive information on the Rademacher complexity, see \cite{Rade_1, Rade_2}. The next theorem states that the difference between a population loss and an empirical loss can be bounded by the Rademacher complexity (see, for instance, Proposition 4.11 in \cite{Rade_2}).

\begin{theorem}\label{Rade_thm}
    Let $\mathcal{F}$ be a class of function and $\{X_i\}_{i=1}^M$ be a sequence of i.i.d. random variables. Then we have the following inequality
    \begin{equation}\label{Rade_ineq}
        \mathbb{E}\bigg[\sup_{f\in\mathcal{F}}\bigg|\frac{1}{M}\sum_{j=1}^Mf(X_j)-\mathbb{E}_{X\sim\mathbb{P}_{\Omega}}f(X)\bigg|\bigg]\leq 2\mathcal{R}_M(\mathcal{F}),
    \end{equation}
    where the expectation is taken for the random variables $\{X_i\}_{i=1}^M$.
\end{theorem}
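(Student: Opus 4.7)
The plan is to use the classical symmetrization argument from statistical learning theory. First, I would introduce a \emph{ghost sample} $\{X'_i\}_{i=1}^M$ drawn i.i.d. from $\mathbb{P}_\Omega$ and independent of $\{X_i\}_{i=1}^M$. Since $\mathbb{E}_{X\sim\mathbb{P}_\Omega} f(X) = \mathbb{E}_{\{X'_i\}}\bigl[\tfrac{1}{M}\sum_{j=1}^M f(X'_j)\bigr]$ for each fixed $f$, I can rewrite the quantity on the left-hand side of \eqref{Rade_ineq} as
\[
\mathbb{E}_{\{X_i\}}\!\left[\sup_{f\in\mathcal{F}}\left|\mathbb{E}_{\{X'_i\}}\!\left[\frac{1}{M}\sum_{j=1}^M\bigl(f(X_j) - f(X'_j)\bigr)\right]\right|\right],
\]
and then pull the inner expectation outside of the $\sup|\cdot|$ using Jensen's inequality (both $|\cdot|$ and $\sup$ preserve the direction of the bound for a convex functional under expectation), producing the upper bound
\[
\mathbb{E}_{\{X_i\},\{X'_i\}}\!\left[\sup_{f\in\mathcal{F}}\left|\frac{1}{M}\sum_{j=1}^M \bigl(f(X_j)-f(X'_j)\bigr)\right|\right].
\]

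Next, I would perform the symmetrization step. Because $X_j$ and $X'_j$ are i.i.d., the random variable $f(X_j)-f(X'_j)$ is symmetric, so its joint distribution (over $j$) is unchanged if each term is multiplied by an independent Rademacher sign $\varepsilon_j\in\{-1,+1\}$. Consequently,
\[
\mathbb{E}_{\{X_i\},\{X'_i\}}\!\left[\sup_{f\in\mathcal{F}}\left|\frac{1}{M}\sum_{j=1}^M\bigl(f(X_j)-f(X'_j)\bigr)\right|\right]
=\mathbb{E}_{\{X_i\},\{X'_i\},\{\varepsilon_i\}}\!\left[\sup_{f\in\mathcal{F}}\left|\frac{1}{M}\sum_{j=1}^M\varepsilon_j\bigl(f(X_j)-f(X'_j)\bigr)\right|\right].
\]
The triangle inequality applied to $\sup|\cdot|$ then splits this into two identical terms, each equal to $\mathcal{R}_M(\mathcal{F})$ by the very definition of the Rademacher complexity, which yields the factor $2$ and completes the proof.

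The main obstacle I would expect is the careful justification of the symmetrization identity: one has to argue that jointly, the vector $\bigl(f(X_j)-f(X'_j)\bigr)_{j=1}^M$ has the same distribution as $\bigl(\varepsilon_j(f(X_j)-f(X'_j))\bigr)_{j=1}^M$, uniformly over the $\sup_{f\in\mathcal{F}}$. This follows because for each $f$ and each $j$, swapping $X_j\leftrightarrow X'_j$ leaves the joint law invariant, and this swap corresponds exactly to flipping the sign $\varepsilon_j$; the invariance passes through the supremum because it is a deterministic functional of the joint law. Once this is in place, the remaining steps (Jensen, triangle inequality, identification with $\mathcal{R}_M(\mathcal{F})$) are routine. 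Measurability of $\sup_{f\in\mathcal{F}}(\cdot)$ should be assumed implicitly, as is standard when stating results of this form.
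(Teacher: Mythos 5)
Your symmetrization argument is correct and complete: the ghost-sample introduction, Jensen's inequality, the sign-flip invariance, and the final triangle inequality are exactly the standard route to this bound. The paper itself offers no proof of this theorem---it simply cites it (Proposition 4.11 of the referenced text on Rademacher complexity)---and your argument is precisely the classical proof given in such references, so there is nothing further to reconcile.
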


Next, we define the following function class of interest concerning the loss functions:
\begin{equation}\label{fct_class}
    \mathcal{F}^{\LL}_n:=\{|A\alpha-F|:\alpha\in\NN_n\},
    \quad \mathcal{F}^{\HH}_n:=\{|A^TA\alpha-A^TF|:\alpha\in\NN_n\}.
\end{equation}
Now, we shall utilize Theorem \ref{Rade_thm} with both $\mathcal{F}=\mathcal{F}^{\LL}_n$ and $\mathcal{F}=\mathcal{F}^{\HH}_n$ to derive the estimate for the generalization error. The precise statement is presented in the following theorem.

\begin{theorem}[Generalization error]\label{coef_gen_err}
    Let Assumption \ref{f_ass} holds and $\kappa(A)$ be the condition number of the finite element matrix $A=S+C$ defined in \eqref{vec_form}. Then we have
    \begin{equation}\label{general_est}
        \EE\left[\|\widehat{\alpha}^{\LL}_n-\widehat{\alpha}^{\LL}_{n,M}\|_{L^1(\Omega)}\right]\lesssim\kappa(A)^{1+d/2}\mathcal{R}_M(\mathcal{F}^{\LL}_n)+\kappa(A)^{d+2}{R}_M(\mathcal{F}^{\HH}_n)+\kappa(A)^2\inf_{\alpha\in\NN_n}\|\alpha-\alpha^*\|_{L^1(\Omega)}.
    \end{equation}
\end{theorem}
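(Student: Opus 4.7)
The plan is to adapt the Ce\'a-type argument used for Theorem~\ref{approx_error} to the empirical setting and bridge the two loss functions $\LL$ and $\HH$ through the Rademacher generalization bound of Theorem~\ref{Rade_thm}. I would begin with the three-way triangle inequality
\begin{equation*}
\|\widehat{\alpha}^{\LL}_n - \widehat{\alpha}^{\LL}_{n,M}\|_{L^1(\Omega)}
\leq \|\widehat{\alpha}^{\LL}_n - \widehat{\alpha}^{\HH}_n\|_{L^1(\Omega)}
+ \|\widehat{\alpha}^{\HH}_n - \widehat{\alpha}^{\HH}_{n,M}\|_{L^1(\Omega)}
+ \|\widehat{\alpha}^{\HH}_{n,M} - \widehat{\alpha}^{\LL}_{n,M}\|_{L^1(\Omega)},
\end{equation*}
routing through the auxiliary $\HH$-minimizers $\widehat{\alpha}^{\HH}_n$ and $\widehat{\alpha}^{\HH}_{n,M}$. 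The first summand is handled exactly as in the proof of Theorem~\ref{approx_error} and contributes the $\kappa(A)^2 \inf_{\alpha\in\NN_n}\|\alpha-\alpha^*\|_{L^1(\Omega)}$ piece; the remaining two are genuine generalization terms.

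For the middle piece, Lemma~\ref{matrix_thm} applied to the symmetric positive-definite matrix $T = A^TA$ (with smallest eigenvalue $\rho_{\min}^2$) gives $\|\widehat{\alpha}^{\HH}_n - \widehat{\alpha}^{\HH}_{n,M}\|_{L^1(\Omega)} \leq \rho_{\min}^{-2}\bigl(\HH(\widehat{\alpha}^{\HH}_n) + \HH(\widehat{\alpha}^{\HH}_{n,M})\bigr)$. The first summand is bounded by $\rho_{\max}^2 \inf\|\alpha - \alpha^*\|_{L^1(\Omega)}$ using minimality of $\widehat{\alpha}^{\HH}_n$ and the identity $A^TA\alpha^* = A^TF$. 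For the expectation of the second summand, the standard add-and-subtract split
\begin{equation*}
\mathbb{E}[\HH(\widehat{\alpha}^{\HH}_{n,M})] \leq \mathbb{E}[\HH^M(\widehat{\alpha}^{\HH}_{n,M})] + \mathbb{E}\Bigl[\sup_{\alpha \in \NN_n}|\HH(\alpha) - \HH^M(\alpha)|\Bigr]
\end{equation*}
reduces the task to bounding the first term by the $\HH^M$-minimality of $\widehat{\alpha}^{\HH}_{n,M}$ together with $\mathbb{E}[\HH^M(\alpha)] = \HH(\alpha)$ for deterministic $\alpha\in\NN_n$, and the second term by $2\mathcal{R}_M(\mathcal{F}^{\HH}_n)$ via Theorem~\ref{Rade_thm}.

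The third piece is treated analogously: Lemma~\ref{matrix_thm} yields $\rho_{\min}^{-2}\bigl(\HH(\widehat{\alpha}^{\HH}_{n,M}) + \HH(\widehat{\alpha}^{\LL}_{n,M})\bigr)$, and only the new summand $\mathbb{E}[\HH(\widehat{\alpha}^{\LL}_{n,M})]$ requires fresh work. Since $\widehat{\alpha}^{\LL}_{n,M}$ minimizes $\LL^M$ rather than $\HH^M$, I would use the pointwise inequalities $\HH(\alpha) \leq \rho_{\max}\LL(\alpha)$ and $\HH^M(\alpha) \leq \rho_{\max}\LL^M(\alpha)$ (consequences of $\|A^T\|=\rho_{\max}$) to reduce the question to estimating $\mathbb{E}[\LL(\widehat{\alpha}^{\LL}_{n,M})]$. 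The same generalization split as above, now applied with $\mathcal{F}^{\LL}_n$, yields $\mathbb{E}[\LL(\widehat{\alpha}^{\LL}_{n,M})] \lesssim \rho_{\max}\inf\|\alpha-\alpha^*\|_{L^1(\Omega)} + \mathcal{R}_M(\mathcal{F}^{\LL}_n)$; multiplying by the prefactor $\rho_{\max}/\rho_{\min}^2$ is exactly what produces the $\kappa(A)^{1+d/2}\mathcal{R}_M(\mathcal{F}^{\LL}_n)$ contribution.

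To finish, I would collect all pieces and convert the singular-value prefactors into powers of $\kappa(A)$ via the estimates of Section~\ref{subsec:eigen_est}, namely $\rho_{\max} \lesssim \kappa(A)^{1-d/2}$ and $\rho_{\min}^{-1} \lesssim \kappa(A)^{d/2}$, which give $\rho_{\max}/\rho_{\min}^2 \lesssim \kappa(A)^{1+d/2}$ and $\rho_{\min}^{-2} \lesssim \kappa(A)^{d}\leq\kappa(A)^{d+2}$ (using $\kappa(A) \geq 1$ to absorb the gap). The main obstacle I anticipate is the careful bookkeeping of two distinct minimality conditions (against $\LL^M$ versus $\HH^M$) and the fact that $\widehat{\alpha}^{\LL}_{n,M}$ and $\widehat{\alpha}^{\HH}_{n,M}$ are themselves random functions of the samples $\{\om_j\}_{j=1}^M$, which forces one to apply the \emph{uniform} Rademacher bound of Theorem~\ref{Rade_thm} instead of a pointwise estimate. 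A secondary subtlety is that $A$ is not symmetric once the convection term is present, so the singular values of $A$ must be used consistently via the symmetrization $T = A^TA$ rather than the eigenvalues of $A$ itself.
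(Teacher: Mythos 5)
Your proposal follows essentially the same route as the paper's proof: the same three-term triangle inequality through the auxiliary $\HH$-minimizers, the same application of Lemma~\ref{matrix_thm} with $T=A^TA$, the same add-and-subtract generalization splits controlled by Theorem~\ref{Rade_thm} for both $\mathcal{F}^{\HH}_n$ and $\mathcal{F}^{\LL}_n$, and the same final conversion of singular-value prefactors into powers of $\kappa(A)$. The argument is correct and matches the paper's proof in all essential respects.
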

\begin{proof}
    By the triangle inequality, we have
    \begin{align*}    
    \EE\left[\|\widehat{\alpha}^{\LL}_n-\widehat{\alpha}^{\LL}_{n,M}\|_{L^1(\Omega)}\right]
    &\leq \EE\left[\|\widehat{\alpha}^{\LL}_n - \widehat{\alpha}^{\HH}_n  \|_{L^1(\Omega)}\right]+\EE\left[\|\widehat{\alpha}^{\HH}_n - \widehat{\alpha}^{\HH}_{n,M}  \|_{L^1(\Omega)}\right]+\EE\left[\|\widehat{\alpha}^{\HH}_{n,M} - \widehat{\alpha}^{\LL}_{n,M}  \|_{L^1(\Omega)}\right]\\
    &=:{\rm{(I)}}+{\rm{(II)}}+{\rm{(III)}}
    \end{align*}
    From the argument used in the proof of Theorem \ref{approx_error}, we know that
    \begin{equation}
        {\rm{(I)}}\lesssim \kappa(A)^2\inf_{\alpha\in\NN_n}\|\alpha-\alpha^*\|_{L^1(\Omega)}.
    \end{equation}
Next, through the repeated applications of Lemma \ref{matrix_thm} and Theorem \ref{Rade_thm}, we obtain
    \begin{align*}
        {\rm{(II)}}
        %&\leq\frac{1}{\rho_{\min}^2}\EE\left[\|A^TA\widehat{\alpha}^{\HH}_n-A^TA\widehat{\alpha}^{\HH}_{n,M}\|_{L^1(\Omega)}\right]\\
        &\leq\frac{1}{\rho^2_{\min}}\EE\left[\|A^TA\widehat{\alpha}^{\HH}_n-A^TF\|_{L^1(\Omega)}+\|A^TA\widehat{\alpha}^{\HH}_{n,M}-A^TF\|_{L^1(\Omega)}\right]\\
        &=\frac{1}{\rho^2_{\min}}\EE\left[\HH(\widehat{\alpha}^{\HH}_n)+\HH(\widehat{\alpha}^{\HH}_{n,M})\right]\lesssim\frac{1}{\rho^2_{\min}}\EE\left[\HH(\widehat{\alpha}^{\HH}_{n,M})\right]\\
        &\lesssim\frac{1}{\rho^2_{\min}}\left(\EE\left[\HH(\widehat{\alpha}^{\HH}_{n,M})-\HH^M(\widehat{\alpha}^{\HH}_{n,M})\right]+\EE\left[\HH^M(\widehat{\alpha}^{\HH}_{n})\right]\right)\\
        &\lesssim\frac{1}{\rho^2_{\min}}\RR_M(\mathcal{F}^{\HH}_n)+\frac{2}{\rho^2_{\min}}\EE\left[\HH^M(\widehat{\alpha}^{\HH}_n)-\HH(\widehat{\alpha}^{\HH}_n)\right]+\frac{2}{\rho^2_{\min}}\EE\left[\HH(\widehat{\alpha}^{\HH}_n)\right]\\
        &\lesssim\frac{1}{\rho^2_{\min}}\RR_M(\mathcal{F}^{\HH}_n)+\left(\frac{\rho_{\max}}{\rho_{\min}}\right)^2\inf_{\alpha\in\NN_n}\|\alpha-\alpha^*\|_{L^1(\Omega)},
    \end{align*}
    where we have used the argument used in the proof of Theorem \ref{approx_error} to obtain the last inequality. Finally, 
\begin{align*}
    {\rm{(III)}}
    %&\leq\frac{1}{(\rho_{\min})^2}\EE\left[\|A^TA\widehat{\alpha}^{\HH}_{n,M}-A^TA\widehat{\alpha}^{\LL}_{n,M}\|_{L^1(\Omega)}\right]\\
    &\leq\frac{1}{(\rho_{\min})^2}\left(\EE\left[\|A^TA\widehat{\alpha}^{\HH}_{n,M}-A^TF\|_{L^1(\Omega)}\right]+\EE\left[\|A^TA\widehat{\alpha}^{\LL}_{n,M}-A^TF\|_{L^1(\Omega)}\right]\right)=:\frac{1}{(\rho_{\min})^2}\left[({\rm{I}})'+({\rm{II}})'\right].
\end{align*}
For the remaining terms, note that 
\begin{align*}
    ({\rm{I}})'
    &\leq\EE\left[\HH(\widehat{\alpha}^{\HH}_{n,M})-\HH^M(\widehat{\alpha}^{\HH}_{n,M})\right]+\EE\left[\HH^M(\widehat{\alpha}^{\HH}_{n,M})\right]\lesssim \RR_M(\mathcal{F}^{\HH}_n)+\EE[\HH^M(\widehat{\alpha}^{\HH}_n)]\\
    &\lesssim\RR_M(\mathcal{F}^{\HH}_n)+\EE\left[\HH^M(\widehat{\alpha}^{\HH}_n)-\HH(\widehat{\alpha}^{\HH}_n)\right]+\EE\left[\HH(\widehat{\alpha}^{\HH}_n)\right]\lesssim\RR_M(\mathcal{F}^{\HH}_n)+(\rho_{\max})^2\inf_{\alpha\in\NN_n}\|\alpha-\alpha^*\|_{L^1(\Omega)},
\end{align*}
and subsequently,
\begin{align*}
    ({\rm{II}})'
    &\leq \mathbb{E}\left[\|A^T\|\LL(\widehat{\alpha}^{\LL}_{n,M})\right]\leq\|A^T\|\left(\mathbb{E}\left[\LL(\widehat{\alpha}^{\LL}_{n,M})-\LL^M(\widehat{\alpha}^{\LL}_{n,M})\right]+\mathbb{E}\left[\LL^M(\widehat{\alpha}^{\LL}_{n,M})\right]\right)\\
    &\leq\|A^T\|\left(2\mathcal{R}_M(\mathcal{F}^{\LL}_n)+\mathbb{E}\left[\LL^M(\widehat{\alpha}^{\LL}_n)-\LL(\widehat{\alpha}^{\LL}_n)\right]+\mathbb{E}\left[\LL(\widehat{\alpha}^{\LL}_n)\right]\right)\\
    &\lesssim\|A^T\|\left(4\mathcal{R}_M(\mathcal{F}^{\LL}_n)+\lambda_{\max}\inf_{\alpha\in\NN_n}\|\alpha-\alpha^*\|_{L^1(\Omega)}\right).
\end{align*}
Again, by the facts that $\|A^T\|=\|A\|=\rho_{\max}$ and $|\lambda_{\max}|\leq\rho_{\max}$ together with the above estimates,
\[
({\rm{III}})\lesssim \kappa(A)^{d+2}\mathcal{R}_M(\mathcal{F}_n^{\mathcal{H}})+\kappa(A)^{1+d/2}\mathcal{R}_M(\mathcal{F}^{\LL}_n)+\kappa(A)^2\inf_{\alpha\in\NN_n}\|\alpha-\alpha^*\|_{L^1(\Omega)}.
\]
By combining the above estimates for $({\rm{I}})$, $({\rm{II}})$ and $({\rm{III}})$, together with the fact $|\lambda_{\max}|\leq\rho_{\max}$, we can derive the desired estimate.
\end{proof}
If the finite element parameter $h>0$ is fixed, along with $\rho_{\min}$, $\rho_{\max}$, and $\kappa(A)$, then, according to the universal approximation property, we know that the last term on the right-hand side of \eqref{general_est} converges to zero.
For the first term and the second term, we may assume that the Rademacher complexities converge to zero as $M\rightarrow\infty$; this is a common assumption in statistical learning theory, and it indeed holds for several function families. See, for example, \cite{rade_upper_1, rade_upper_2, rade_upper_3, rade_upper_4}, where this issue was addressed. Therefore, in this case, we can show that for fixed $h>0$, the generalization error converges to zero as $n$, $M\rightarrow\infty.$ More precisely, we have
\begin{equation}\label{gen_err_conv}
    \EE\left[\|\widehat{\alpha}^{\LL}_n-\widehat{\alpha}^{\LL}_{n,M}\|_{L^1(\Omega)}\right]\rightarrow 0\quad{\rm{as}}\,\,\,n,\,M\rightarrow\infty.
\end{equation}

%In the next section, we will investigate certain scenarios where we can explicitly estimate $\rho_{\max}$, $\rho_{\min}$ in terms of $h>0$, and $\inf_{\alpha\in\NN_n}\|\alpha-\alpha^*\|_{L^1(\Omega)}$, $\RR_M(\mathcal{F}^{\LL}_n)$, $\RR_M(\mathcal{F}^{\HH}_n)$ in terms of $n$, $M\in\mathbb{N}$, and hence obtain the complete error bounds for the coefficients approximation. 

\subsection{Convergence of approximate solutions}
Based on the convergence of coefficients \eqref{app_t_est} and \eqref{gen_err_conv}, we shall prove the convergence of the FEONet prediction to the finite element approximation, which is encapsulated in the following theorem.

\begin{theorem}\label{main_thm_whole}
Suppose Assumption \ref{f_ass} holds, and assume that for all $n\in\mathbb{N}$, $\RR_M(\mathcal{F}^{\LL}_n)$ and $\RR_M(\mathcal{F}^{\HH}_n)$ converge to $0$ as $M\rightarrow\infty.$ %For given $h>0$, let us further assume that $\V_h\subset H^s(D)$. and the basis $\{\phi^h_1,\cdots,\phi^h_{N_h}\}$ of $\V_h$ consists of piecewise polynomials of degree less than or equal to $s\in\mathbb{N}$. 
Then we have 
\begin{equation}\label{main_conv_whole}
        \lim_{n\rightarrow \infty}\lim_{M\rightarrow \infty}\mathbb{E}\left[\|u_h-u_{h,n,M}\|_{L^1(\Omega;L^2(D))}\right]=0,
\end{equation}
where the expectation is to take over the random sampling $\om_j\sim\mathbb{P}_{\Omega}$.
\end{theorem}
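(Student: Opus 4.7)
The plan is to transfer the convergence of the coefficients established in Theorems~\ref{approx_error} and~\ref{coef_gen_err} into convergence of the reconstructed solutions. Since $u_h-u_{h,n,M}=\sum_{k=1}^{N_h}\bigl(\alpha^*_k-(\widehat{\alpha}^{\LL}_{n,M})_k\bigr)\phi_k$, the map from the coefficient vector to the finite element function is, for fixed $h>0$, a bounded linear operator from $\R^{N_h}$ into $L^2(D)$. Concretely, by the Cauchy--Schwarz inequality together with the locally supported, shape-regular basis (which gives $\|\phi_k\|_{L^2(D)}\lesssim h^{d/2}$ and $N_h h^d\simeq 1$), one obtains a bound of the form
\begin{equation*}
    \|u_h(\cdot,\om)-u_{h,n,M}(\cdot,\om)\|_{L^2(D)}\lesssim C(h)\,\bigl|\alpha^*(\om)-\widehat{\alpha}^{\LL}_{n,M}(\om)\bigr|,
\end{equation*}
for some constant $C(h)$ depending only on $h$ and the mesh regularity. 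Integrating this pointwise-in-$\om$ inequality over $\Omega$ and taking the outer expectation over the random samples $\{\om_j\}$ yields
\begin{equation*}
    \EE\bigl[\|u_h-u_{h,n,M}\|_{L^1(\Omega;L^2(D))}\bigr]\lesssim C(h)\,\EE\bigl[\|\alpha^*-\widehat{\alpha}^{\LL}_{n,M}\|_{L^1(\Omega)}\bigr].
\end{equation*}

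Next I would split the coefficient error by the triangle inequality into the approximation part $\alpha^*-\widehat{\alpha}^{\LL}_n$ and the generalization part $\widehat{\alpha}^{\LL}_n-\widehat{\alpha}^{\LL}_{n,M}$. Applying Theorem~\ref{approx_error} to the first piece gives
\begin{equation*}
\|\alpha^*-\widehat{\alpha}^{\LL}_n\|_{L^1(\Omega)}\lesssim \kappa(A)^2\inf_{\alpha\in\NN_n}\|\alpha-\alpha^*\|_{L^1(\Omega)},
\end{equation*}
and applying Theorem~\ref{coef_gen_err} to the second piece gives an expectation bound controlled by $\kappa(A)^{1+d/2}\RR_M(\mathcal{F}^{\LL}_n)+\kappa(A)^{d+2}\RR_M(\mathcal{F}^{\HH}_n)+\kappa(A)^2\inf_{\alpha\in\NN_n}\|\alpha-\alpha^*\|_{L^1(\Omega)}$. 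Since $h>0$ is held fixed throughout, all the factors $C(h)$, $\kappa(A)$, $\rho_{\min}$, $\rho_{\max}$ are finite constants.

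Now I would take the iterated limit in the stated order. First, send $M\to\infty$ with $n$ and $h$ fixed: the two Rademacher complexity terms vanish by the standing hypothesis of the theorem, so only the approximation-type residual survives. Then send $n\to\infty$: since $\alpha^*=A^{-1}F$ lies in $C(\Omega;\R^{N_h})$ by Assumption~\ref{f_ass} and $\Omega$ is compact, the universal approximation statement Theorem~\ref{ass_1} ensures $\inf_{\alpha\in\NN_n}\|\alpha-\alpha^*\|_{C(\Omega)}\to 0$, which implies convergence to $0$ in $L^1(\Omega)$ as well. Combining these two limits with the linear bound above delivers \eqref{main_conv_whole}.

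The only mildly delicate point is the step transferring the $\ell^2$-coefficient error to the $L^2(D)$-function error: one must justify that for fixed $h>0$ the reconstruction map $\alpha\mapsto\sum_k\alpha_k\phi_k$ has $L^2$-operator norm bounded in terms of $h$ (the mass-matrix analogue of the eigenvalue bounds in Section~\ref{subsec:eigen_est}). This is entirely standard from shape regularity, but it is worth stating explicitly so that the dependence on $h$ is transparent, which will matter for the refined estimates in Section~\ref{sec:err_est}. Everything else is a routine assembly of the already-proved approximation and generalization bounds.
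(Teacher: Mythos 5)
Your proposal is correct and follows essentially the same route as the paper: bound $\|u_h-u_{h,n,M}\|_{L^1(\Omega;L^2(D))}$ by an $h$-dependent constant times $\|\alpha^*-\widehat{\alpha}^{\LL}_{n,M}\|_{L^1(\Omega)}$ (the paper uses the triangle inequality in $L^2(D)$ and the factor $\max_j\|\phi_j\|_{L^2(D)}N_h^{1/2}$, which is your reconstruction-operator bound), then split into the approximation and generalization pieces and invoke \eqref{app_t_est} and \eqref{gen_err_conv} for the iterated limit. The only cosmetic difference is that you make the $h$-dependence of the reconstruction map explicit, which the paper defers to the refined estimate \eqref{total_est_1}.
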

\begin{proof}
From the definition \eqref{for_0} and \eqref{for_5}, we have that
\begin{comment}
\begin{equation}\label{h_fix_ext}
    \begin{aligned}
       &\|u_h-u_{h,n,M}\|^2_{L^2(\Omega;H^1(D))}\\
    &=\int_{\Omega}\int_D \sum_{|\beta|\leq s} \bigg|\partial^{\beta}\bigg(\sum^{N_h}_{i=1}(\alpha^*_i-(\widehat{\alpha}^{\LL}_{n,M})_i)\phi_i\bigg)\bigg|^2\dx\dom=\int_{\Omega}\int_D \sum_{|\beta|\leq s} \bigg|\sum^{N_h}_{i=1}(\alpha^*_i-(\widehat{\alpha}^{\LL}_{n,M})_i)(\partial^{\beta}\phi_i)\bigg|^2\dx\dom\\
    &=\int_{\Omega}\int_D \sum_{|\beta|\leq s} \bigg|\sum^{N_h}_{i,j=1}(\alpha^*_i-(\widehat{\alpha}^{\LL}_{n,M})_i)(\alpha^*_j-(\widehat{\alpha}^{\LL}_{n,M})_j)\partial^{\beta}\phi_i\partial^{\beta}\phi_j\bigg|\dx\dom\\
    &\lesssim\int_{\Omega}\int_D\sum_{|\beta|\leq s}\bigg(\sum^{N_h}_{i,j=1}|\alpha^*_i-(\widehat{\alpha}^{\LL}_{n,M})_i|^2|\partial^{\beta}\phi_i|^2+\sum^{N_h}_{i,j=1}|\alpha^*_j-(\widehat{\alpha}^{\LL}_{n,M})_j|^2|\partial^{\beta}\phi_j|^2\bigg)\dx\dom\\
    &\lesssim\sum^{N_h}_{i,j=1}\bigg(\int_{\Omega}|\alpha^*_i-(\widehat{\alpha}^{\LL}_{n,M})_i|^2\dom\bigg)\bigg(\sum_{|\beta|\leq s}\int_D|\partial^{\beta}\phi_i|^2\dx\bigg)\\
    &\lesssim C(d,N_h)\sum_{i=1}^{N_h}\bigg(\int_{\Omega}|\alpha^*_i-(\widehat{\alpha}^{\LL}_{n,M})_i|^2\dom\bigg)\|\phi_i\|^2_{H^s(D)}\lesssim C(d,N_h)\max_{1\leq i\leq N_h}\|\phi_i\|^2_{H^s(D)}\|\alpha^*-\widehat{\alpha}^{\LL}_{n,M}\|^2_{L^2(\Omega)}.
    \end{aligned}
\end{equation}
\end{comment}
\begin{equation}\label{h_fix_ext}
\begin{aligned}
    \|u_h-u_{h,n,M}\|_{L^1(\Omega;L^2(D))}
    &=\int_{\Omega}\bigg\|\sum^{N_h}_{j=1}(\alpha^*_j-(\widehat{\alpha}^{\LL}_{n,M})_j)\phi_j\bigg\|_{L^2(D)}\dom\lesssim\int_{\Omega}\bigg(\sum^{N_h}_{j=1}(\alpha^*_j-(\widehat{\alpha}^{\LL}_{n,M})_j)\|\phi_j\|_{L^2(D)}\bigg)\dom\\
    &\lesssim\max_{1\leq i\leq N_h}\|\phi_j\|_{L^2(D)}\int_{\Omega}\bigg(\sum^{N_h}_{j=1}|\alpha^*_j-(\widehat{\alpha}^{\LL}_{n,M})_j|\bigg)\dom\\
    &\lesssim \max_{1\leq i\leq N_h}\|\phi_j\|_{L^2(D)}N_h^{1/2}\|\alpha^*-\widehat{\alpha}^{\LL}_{n,M}\|_{L^1(\Omega)}.
\end{aligned}
\end{equation}
Therefore, for given $h>0$, by \eqref{app_t_est} and \eqref{gen_err_conv}, we conclude that as $n$, $M\rightarrow\infty$,
\[
\mathbb{E}\left[\|u_h-u_{h,n,M}\|_{L^1(\Omega;L^2(D))}\right]\lesssim\mathbb{E}\left[\|\alpha^*-\widehat{\alpha}^{\LL}_{n,M}\|_{L^1(\Omega)}\right]\lesssim \mathbb{E}\left[\|\alpha^*-\widehat{\alpha}^{\LL}_{n}\|_{L^1(\Omega)}\right] + \mathbb{E}\left[\|\widehat{\alpha}^{\LL}_n-\widehat{\alpha}^{\LL}_{n,M}\|_{L^1(\Omega)}\right]\rightarrow 0.
\]
\end{proof}

\section{Error estimates for approximate solutions}\label{sec:err_est}
In the previous section, we considered the general class of second-order elliptic equations \eqref{eq1}-\eqref{eq2} and proved that the error $\|u_h-u_{h,n,M}\|_{L^1(\Omega;L^2(D))}$ goes to zero as $n$, $M\rightarrow\infty$. On the other hand, in this section, we will deal with some particular cases where we can derive an explicit error estimate.

First, we shall consider the case of self-adjoint PDEs, i.e., the case when $\bb(x)=0$ for all $x\in D$. In this case, we can significantly simplify the proof in Section \ref{sec:conv} by avoiding the use of the auxiliary loss function $\HH$. Furthermore, by using the condition number estimates of finite element matrices, we can identify the role of the finite element parameter $h>0$ in the entire convergence and obtain the explicit error bound. Secondly, we will introduce a function space called the Barron space, which is the family of functions endowed with a quantity that can control the approximation and generalization errors by adopting a particular machine
learning model. We will provide a sufficient condition for our target function to be contained in the Barron space, and derive the explicit error bounds which identify the role of the parameters $h>0$, $n$, $M\in\mathbb{N}$ in the overall convergence of the FEONet.

\subsection{Self-adjoint PDEs}
Henceforth, in this section, let us assume that $\bb(x)=0$ for any $x\in D$ in \eqref{eq1}, so that the equations under consideration become self-adjoint:
\begin{align}
    -{\rm{div}}\,(\aaa(x)\nabla u)+c(x)u&=f(x)\quad{\rm{in}}\,\,D,\label{sym_eq1} \\
u(x)&=0\quad\quad\,\,{\rm{on}}\,\,\partial D.\label{sym_eq2}
\end{align}
We shall adjust the analytical setting discussed in Section \ref{sec:prelim} accordingly, so all the notations are now for the equation \eqref{sym_eq1}-\eqref{sym_eq2}. In this case, the finite element matrix $A$ is just $S$ (instead of $S+C$ as before), which is symmetric and positive-definite. Therefore, we can directly apply Lemma \ref{matrix_thm} with $T=A=S$, which allows us to avoid the use of the auxiliary loss function $\HH$. As we can see from the following theorems, since we only work the original loss function $\LL$, the main assertions and the proofs can be significantly simplified. Again, let us denote the condition number of the finite element matrix $A=S$ by $\kappa(A)$, and the largest eigenvalue and the smallest eigenvalue by $\lambda_{\max}$ and $\lambda_{\min}$ respectively. Note that since $A$ is symmetric (and hence normal) and positive-definite, we can write $\kappa(A)=\lambda_{\max}/\lambda_{\min}$.
\begin{theorem}[Approximation error]\label{symm_approx_error}
    Suppose that Assumption \ref{f_ass} holds. For the finite element coefficients $\alpha^*\in C(\Omega;\R^{N_h})$ and the approximate coefficient $\widehat{\alpha}^{\LL}_n\in\NN_n$, we have
    \begin{equation}\label{sym_approx_est}
        \|\alpha^*-\widehat{\alpha}^{\LL}_n\|_{L^1(\Omega)}\leq\kappa(A)\inf_{\alpha\in\NN_n}\|\alpha-\alpha^*\|_{L^1(\Omega)}.
    \end{equation}
\end{theorem}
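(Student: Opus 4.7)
The plan is to mirror the argument of Theorem \ref{approx_error}, but exploit the fact that in the self-adjoint case $A=S$ is itself symmetric and positive-definite, so Lemma \ref{matrix_thm} applies directly with $T=A$ rather than $T=A^{T}A$. This removes the need to introduce the auxiliary loss $\HH$ and the associated composition of bounds, so the condition number enters only once, yielding $\kappa(A)$ instead of $\kappa(A)^2$.

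Concretely, I would first record the key identity from \eqref{obs_J} combined with the fact that $A\alpha^{*}=F$: for every $\alpha\in C(\Omega;\R^{N_h})$,
\[
\LL(\alpha)=\|A\alpha(\om)-F(\om)\|_{L^1(\Omega)}=\|A(\alpha-\alpha^{*})(\om)\|_{L^1(\Omega)}.
\]
Applying Lemma \ref{matrix_thm} pointwise in $\om$ to $x=(\alpha-\alpha^{*})(\om)\in\R^{N_h}$ and then integrating over $\Omega$ gives the two-sided sandwich
\[
\lambda_{\min}\,\|\alpha-\alpha^{*}\|_{L^1(\Omega)}\leq \LL(\alpha)\leq \lambda_{\max}\,\|\alpha-\alpha^{*}\|_{L^1(\Omega)}.
\]

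Next I would use the minimality of $\widehat{\alpha}^{\LL}_n\in\NN_n$ for $\LL$: for every $\alpha\in\NN_n$,
\[
\lambda_{\min}\,\|\widehat{\alpha}^{\LL}_n-\alpha^{*}\|_{L^1(\Omega)}\leq \LL(\widehat{\alpha}^{\LL}_n)\leq \LL(\alpha)\leq \lambda_{\max}\,\|\alpha-\alpha^{*}\|_{L^1(\Omega)}.
\]
Taking the infimum over $\alpha\in\NN_n$ on the right-hand side and dividing by $\lambda_{\min}>0$ produces
\[
\|\alpha^{*}-\widehat{\alpha}^{\LL}_n\|_{L^1(\Omega)}\leq \frac{\lambda_{\max}}{\lambda_{\min}}\inf_{\alpha\in\NN_n}\|\alpha-\alpha^{*}\|_{L^1(\Omega)}=\kappa(A)\inf_{\alpha\in\NN_n}\|\alpha-\alpha^{*}\|_{L^1(\Omega)},
\]
where the last equality uses that $A$ is symmetric positive-definite (hence normal) so that \eqref{cond_for} applies with $|\lambda_{\max}|=\lambda_{\max}$ and $|\lambda_{\min}|=\lambda_{\min}$.

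There is essentially no difficult step: the whole proof reduces to correctly invoking Lemma \ref{matrix_thm} with $T=A$ and combining it with the definition of the minimizer. The only point requiring a little care is the justification that Lemma \ref{matrix_thm} can be integrated against $\om$ (which is immediate from the pointwise bound and monotonicity of the integral) and that the minimum and supremum of the eigenvalues of the symmetric PD matrix $A$ give precisely $\kappa(A)$, so no factor $\kappa(A)^2$ arises as in the non-self-adjoint analysis.
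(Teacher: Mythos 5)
Your proof is correct and follows essentially the same route as the paper's: apply Lemma \ref{matrix_thm} with $T=A$ (valid since $A=S$ is symmetric positive-definite), use the minimality of $\widehat{\alpha}^{\LL}_n$ over $\NN_n$, and bound $\LL(\alpha)$ above by $\lambda_{\max}\|\alpha-\alpha^*\|_{L^1(\Omega)}$ via $A\alpha^*=F$. The only cosmetic difference is that you invoke the identity $\LL(\alpha)=\|A(\alpha-\alpha^*)\|_{L^1(\Omega)}$ up front, whereas the paper reaches the same bounds through the triangle inequality with the vanishing term $\|A\alpha^*-F\|_{L^1(\Omega)}$ carried along explicitly.
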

\begin{proof}
By using Lemma \ref{matrix_thm} with $T=A=S$, it follows that
\begin{align*}
    \|\alpha^*-\widehat{\alpha}^{\LL}_n\|_{L^1(\Omega)}
    %&\leq\frac{1}{\lambda_{\rm{min}}}\|A\alpha^*-A\widehat{\alpha}^{\LL}_n\|_{L^1(\Omega)}
    &\leq\frac{1}{\lambda_{\rm{min}}}\left(\|A\alpha^*-F\|_{L^1(\Omega)}+\|A\widehat{\alpha}^{\LL}_n-F\|_{L^1(\Omega)}\right)\\
    &=\frac{1}{\lambda_{\rm{min}}}\LL(\widehat{\alpha}^{\LL}_{n})\leq\frac{1}{\lambda_{\rm{min}}}\inf_{\alpha\in\NN_n}\LL(\alpha)=\frac{1}{\lambda_{\rm{min}}}\inf_{\alpha\in\NN_n}\|A\alpha-F\|_{L^1(\Omega)}\\
    &\leq\frac{1}{\lambda_{\rm{min}}}\inf_{\alpha\in\NN_n}\left(\|A\alpha-A\alpha^*\|_{L^1(\Omega)}+\|A\alpha^*-F\|_{L^1(\Omega)}\right)\\
    &\leq\frac{\lambda_{\rm{max}}}{\lambda_{\rm{min}}}\inf_{\alpha\in\NN_n}\|\alpha-\alpha^*\|_{L^1(\Omega)}
    =\kappa(A)\inf_{\alpha\in\NN_n}\|\alpha-\alpha^*\|_{L^1(\Omega)}.
\end{align*}
\end{proof}

\begin{theorem}[Generalization error]\label{sym_gen_error}
    If Assumption \ref{f_ass} holds, we have
    \begin{equation}\label{symm_general_est}
        \EE\left[\|\widehat{\alpha}^{\LL}_n-\widehat{\alpha}^{\LL}_{n,M}\|_{L^1(\Omega)}\right]\lesssim
        \kappa(A)^{d/2} \mathcal{R}_M(\mathcal{F}^{\LL}_n)+\kappa(A)\inf_{\alpha\in\NN_n}\|\alpha-\alpha^*\|_{L^1(\Omega)}.
    \end{equation}
\end{theorem}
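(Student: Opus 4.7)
The plan is to mimic the structure of the proof of Theorem \ref{coef_gen_err}, but since $A = S$ is now symmetric positive-definite, I would apply Lemma \ref{matrix_thm} directly with $T = A$ and completely bypass the auxiliary loss $\HH$. This cuts the three-way triangle splitting (I)+(II)+(III) down to a single application of the triangle inequality and the standard ``approximation plus generalization'' manipulation of $\LL$.

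First I would write
\[
\|\widehat{\alpha}^{\LL}_n - \widehat{\alpha}^{\LL}_{n,M}\|_{L^1(\Omega)}
\leq \frac{1}{\lambda_{\min}}\|A\widehat{\alpha}^{\LL}_n - A\widehat{\alpha}^{\LL}_{n,M}\|_{L^1(\Omega)}
\leq \frac{1}{\lambda_{\min}}\bigl(\LL(\widehat{\alpha}^{\LL}_n) + \LL(\widehat{\alpha}^{\LL}_{n,M})\bigr),
\]
using Lemma \ref{matrix_thm} for the first step and adding and subtracting $F$ inside the $L^1(\Omega)$ norm for the second. The term $\LL(\widehat{\alpha}^{\LL}_n)$ is controlled exactly as in the proof of Theorem \ref{symm_approx_error}: by minimality and $A\alpha^* = F$ it is bounded by $\lambda_{\max}\inf_{\alpha\in\NN_n}\|\alpha-\alpha^*\|_{L^1(\Omega)}$.

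To handle $\EE[\LL(\widehat{\alpha}^{\LL}_{n,M})]$ I would use the classical symmetrization trick: writing
\[
\LL(\widehat{\alpha}^{\LL}_{n,M}) \le \bigl[\LL(\widehat{\alpha}^{\LL}_{n,M}) - \LL^M(\widehat{\alpha}^{\LL}_{n,M})\bigr] + \LL^M(\widehat{\alpha}^{\LL}_{n}) + \bigl[\LL(\widehat{\alpha}^{\LL}_{n}) - \LL^M(\widehat{\alpha}^{\LL}_{n})\bigr] - \LL(\widehat{\alpha}^{\LL}_n) + \LL(\widehat{\alpha}^{\LL}_n),
\]
where I have used the minimality $\LL^M(\widehat{\alpha}^{\LL}_{n,M}) \leq \LL^M(\widehat{\alpha}^{\LL}_n)$. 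Taking expectation, the two $\LL - \LL^M$ terms are bounded by $\sup_{f\in\mathcal{F}^{\LL}_n}|\LL(f)-\LL^M(f)|$, which by Theorem \ref{Rade_thm} yields $2\RR_M(\mathcal{F}^{\LL}_n)$ each. The remaining $\LL(\widehat{\alpha}^{\LL}_n)$ term is again bounded by $\lambda_{\max}\inf_{\alpha\in\NN_n}\|\alpha-\alpha^*\|_{L^1(\Omega)}$.

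Combining everything gives a bound of the form
\[
\EE\bigl[\|\widehat{\alpha}^{\LL}_n - \widehat{\alpha}^{\LL}_{n,M}\|_{L^1(\Omega)}\bigr]
\lesssim \frac{1}{\lambda_{\min}}\RR_M(\mathcal{F}^{\LL}_n) + \frac{\lambda_{\max}}{\lambda_{\min}}\inf_{\alpha\in\NN_n}\|\alpha-\alpha^*\|_{L^1(\Omega)}.
\]
The last step, and the only place where the condition number estimates from Section \ref{subsec:eigen_est} play a nontrivial role, is converting the prefactor on the Rademacher term: since $A$ is normal here, $\kappa(A) = \lambda_{\max}/\lambda_{\min}$, and by \eqref{inv_min_eigen} we have $\lambda_{\min}^{-1} \lesssim \kappa(A)^{d/2}$, which produces the stated $\kappa(A)^{d/2}$ factor. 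There is no real obstacle in this proof; the only subtlety to watch is to keep track of the correct eigenvalue (not singular value) estimates, since the symmetry of $A$ is what makes the power $d/2$ (rather than $d+2$, as in the non-self-adjoint case of Theorem \ref{coef_gen_err}) attainable.
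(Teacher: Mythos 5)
Your proof is correct and follows essentially the same route as the paper's: apply Lemma \ref{matrix_thm} with $T=A$, pass to $\LL(\widehat{\alpha}^{\LL}_n)+\LL(\widehat{\alpha}^{\LL}_{n,M})$ by the triangle inequality, and control $\EE[\LL(\widehat{\alpha}^{\LL}_{n,M})]$ by the standard minimality-plus-symmetrization decomposition bounded via Theorem \ref{Rade_thm}, with $\lambda_{\min}^{-1}\lesssim\kappa(A)^{d/2}$ and $\lambda_{\max}/\lambda_{\min}=\kappa(A)$ supplying the final prefactors. (The middle bracket in your displayed decomposition should read $\LL^M(\widehat{\alpha}^{\LL}_n)-\LL(\widehat{\alpha}^{\LL}_n)$ rather than its negative, but since you bound both difference terms by the supremum of the absolute deviation this sign slip is immaterial.)
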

\begin{proof}
    By the use of Lemma \ref{matrix_thm} and Theorem \ref{Rade_thm}, we have
    \begin{align*}
        \EE\left[\|\widehat{\alpha}^{\LL}_n-\widehat{\alpha}^{\LL}_{n,M}\|_{L^1(\Omega)}\right]
        %&\leq\frac{1}{\lambda_{\min}}\EE\left[\|A\widehat{\alpha}^{\LL}_n-A\widehat{\alpha}^{\LL}_{n,M}\|_{L^1(\Omega)}\right]\\
        &\leq\frac{1}{\lambda_{\min}}\EE\left[\|A\widehat{\alpha}^{\LL}_n-F\|_{L^1(\Omega)}+\|A\widehat{\alpha}^{\LL}_{n,M}-F\|_{L^1(\Omega)}\right]\\
        &=\frac{1}{\lambda_{\min}}\EE\left[\LL(\widehat{\alpha}^{\LL}_n)+\LL(\widehat{\alpha}^{\LL}_{n,M})\right]\leq\frac{1}{\lambda_{\min}}\EE\left[\LL(\widehat{\alpha}^{\LL}_{n,M})\right]\\
        &\lesssim\frac{1}{\lambda_{\min}}\left(\EE\left[\LL(\widehat{\alpha}^{\LL}_{n,M})-\LL^M(\widehat{\alpha}^{\LL}_{n,M})\right]+\EE\left[\LL^M(\widehat{\alpha}^{\LL}_{n})\right]\right)\\
        &\lesssim\frac{1}{\lambda_{\min}}\RR_M(\mathcal{F}^{\LL}_n)+\frac{1}{\lambda_{\min}}\EE\left[\LL^M(\widehat{\alpha}^{\LL}_n)-\LL(\widehat{\alpha}^{\LL}_n)\right]+\frac{1}{\lambda_{\min}}\EE\left[\LL(\widehat{\alpha}^{\LL}_n)\right]\\
        &\lesssim\frac{1}{\lambda_{\min}}\RR_M(\mathcal{F}^{\LL}_n)+\frac{\lambda_{\max}}{\lambda_{\min}}\inf_{\alpha\in\NN_n}\|\alpha-\alpha^*\|_{L^1(\Omega)}.
    \end{align*}
Therefore, from \eqref{min_eigen} and \eqref{cond_lower_bound}, we obtain the desired inequality.
\end{proof}

Now, from Theorem \ref{symm_approx_error} and Theorem \ref{sym_gen_error}, we have the following estimates:
\begin{equation}\label{coeff_est_sym}
    \mathbb{E}\left[\|\alpha^*-\widehat{\alpha}^{\LL}_{n,M}\|_{L^1(\Omega)}\right]\lesssim\kappa(A)\inf_{\alpha\in\NN_n}\|\alpha-\alpha^*\|_{L^1(\Omega)}+\kappa(A)^{d/2}\RR_M(\mathcal{F}^{\LL}_n).
\end{equation}
The above estimate plays a pivotal role in the error analysis. For example, from the above inequality together with the classical finite element theory, we can estimate an error between the true solution $u$ and the solution prediction $u_{h,n,M}$ by the FEONet. More precisely, for the $(P \ell)$-finite element approximation (piecewise polynomial function degree less than or equal to $\ell\in\mathbb{N}$), by the optimal error estimate of finite element approximation and the elliptic regularity theory, it follows that
\[
    \|u-u_h\|_{L^1(\Omega;L^2(D))}\lesssim h^{\ell+1}\int_{\Omega}|u(\om)|_{H^{\ell+1}(D)}\dom\lesssim h^{\ell+1}\int_{\Omega}\|f(\om)\|_{H^{\ell-1}(D)}\dom= h^{\ell+1}\|f\|_{L^1(\Omega;H^{\ell-1}(D))}.
\]
Furthermore, for the finite element basis function $\phi_j^h$ on the shape-regular partition, it is straightforward to verify that $\|\phi^h_j\|_{L^2(D)}\approx h^{d/2}$. Therefore, as we did in \eqref{h_fix_ext}, we can obtain the following estimate:
\begin{equation}\label{total_est_1}
    \begin{aligned}
        \mathbb{E}\left[\|u-u_{h,n,M}\|_{L^1(\Omega;L^2(D))}
        \right]   
        &\lesssim h^{\ell+1}+\max_{1\leq i \leq N_h}\|\phi_j\|_{L^2(D)}N^{1/2}_h\mathbb{E}\left[\|\alpha^*-\widehat{\alpha}^{\LL}_{n,M}\|_{L^1(\Omega)}\right]\\
        %&\lesssim h^{\ell+1}+h^{-2}\inf_{\alpha\in\NN_n}\|\alpha-\alpha^*\|_{L^1(\Omega)}+h^{-d}\RR_M(\mathcal{F}^{\LL}_n)\\
        &\lesssim h^{\ell+1}+\kappa(A)\inf_{\alpha\in\NN_n}\|\alpha-\alpha^*\|_{L^1(\Omega)}+\kappa(A)^{d/2}\RR_M(\mathcal{F}^{\LL}_n)
    \end{aligned}
\end{equation}
Unlike the convergence obtained in Theorem \ref{main_thm_whole} where the error $u_h-u_{h,n,M}$ with fixed $h>0$ was addressed, here we do not fix $h>0$ and investigate the total error $u-u_{h,n,M}$ (instead of $u_h-u_{h,n,M}$), identifying the role of $h>0$ in the entire convergence. It is noteworthy from the estimate \eqref{total_est_1}, that the choice of small $h>0$ may not always guarantee a small error, and we need to choose a suitable $h>0$ to minimize the error for the predicted solution. In the next chapter, with some regularity assumption, we will further estimate the right-hand side of \eqref{total_est_1} in terms of $n$, $M\in\mathbb{N}$ to obtain the complete error estimate, which is the main goal of this paper.

\subsection{The Barron Space}
    In this section, we shall introduce a certain class of functions called the Barron space, to honor the seminal work of Barron concerning the mathematical analysis of a class of two-layer neural networks. It has been studied that this type of function can be well approximated by two-layer neural networks with dimension-independent convergence rates with respect to the width \cite{Barron_1, rade_upper_2}. We start with the so-called spectral Barron space which was Barron's original approach \cite{spectral_barron_2, spectral_barron_3, rade_upper_3}. For an integrable function $f$ defined on $\Omega\subset \R^m$, we define the quantity
\begin{equation}\label{barron_gamma}
    \gamma_s(f_e)=\int_{\R^m}(1+|\xi|)^s|\widehat{f_e}(\xi)|\,\mathrm{d}\xi,
\end{equation}
where $f_e\in L^1(\R^m)$ is an extension of $f$ to $\R^m$ and $\widehat{f_e}$ is the Fourier transform of $f_e$. We call the space of functions with bounded $\gamma_s(\cdot)$ as the Barron space. More precisely, we shall define
\[
    \mathcal{B}^s(\Omega):=\{f:\Omega\rightarrow\R:\gamma_s(f_e)<\infty\quad\text{for some extension of}\,\,f_e\,\,\text{of}\,\,f\},
\]
which is equipped with the norm
\[
    \|f\|_{\mathcal{B}^s(\Omega)}:=\inf_{f_e|_{\Omega}=f}\gamma(f_e),
\]    
where the infimum is taken over all extensions $f_e\in L^1(\R^m)$. Then one can show that $\mathcal{B}^s(\Omega)$ is continuously embedded in $H^s(\Omega)$ provided that $\Omega$ is bounded \cite{spectral_barron_2}. There are some results providing the characterization of the Barron Space. For example, a sufficiently smooth function with compact support is contained in the Barron space, which is presented in the following lemma (see, e.g., \cite{barron_ext}).
\begin{lemma}\label{diff_char}
    If $g\in C^{\beta}_0(\R^m)$ with $|\beta|>m/2+s$ for some $s\in\mathbb{N}$, then $g\in\mathcal{B}^s(\R^m)$ in the sense that
\begin{equation}\label{barron_char}
    \gamma_s(g)^2\lesssim\int_{\R^m}\left(|g|^2+|\partial^{\beta}g|^2\right)\dx<\infty.
\end{equation}
\end{lemma}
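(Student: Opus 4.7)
The plan is to exploit a weighted Cauchy--Schwarz split on the Fourier side, which is the standard route from an $L^1$-based spectral Barron seminorm $\gamma_s$ to an $L^2$-based Sobolev quantity. Concretely, I would write
\[
(1+|\xi|)^s |\widehat{g}(\xi)| = (1+|\xi|)^{s-|\beta|} \cdot (1+|\xi|)^{|\beta|}|\widehat{g}(\xi)|
\]
and apply the Cauchy--Schwarz inequality to obtain
\[
\gamma_s(g)^2 \;\leq\; \left(\int_{\R^m} (1+|\xi|)^{2s-2|\beta|}\,\mathrm{d}\xi\right) \left(\int_{\R^m} (1+|\xi|)^{2|\beta|} |\widehat{g}(\xi)|^2\,\mathrm{d}\xi\right).
\]
The proof then reduces to showing that the first factor is a finite constant under the hypothesis $|\beta|>m/2+s$, and that the second factor is controlled by the right-hand side of \eqref{barron_char}.

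For the first factor, passing to polar coordinates gives
\[
\int_{\R^m} (1+|\xi|)^{2s-2|\beta|}\,\mathrm{d}\xi \;\simeq\; \int_0^{\infty} r^{m-1} (1+r)^{2s-2|\beta|}\,\mathrm{d}r,
\]
which converges precisely when $m-1+2s-2|\beta|<-1$, i.e.\ exactly when $|\beta|>m/2+s$; this is the only place the hypothesis is used, and it is sharp. For the second factor, I would use the elementary inequality $(1+|\xi|)^{2|\beta|}\lesssim 1+|\xi|^{2|\beta|}$ together with the expansion $|\xi|^{2|\beta|}=\bigl(\sum_{i=1}^{m}\xi_i^2\bigr)^{|\beta|}$, which by the multinomial theorem is a non-negative linear combination of monomials $\xi^{2\alpha}$ with $|\alpha|=|\beta|$. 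Combined with Plancherel and the identity $\widehat{\partial^{\alpha}g}(\xi)=(i\xi)^{\alpha}\widehat{g}(\xi)$, this yields
\[
\int_{\R^m} (1+|\xi|)^{2|\beta|} |\widehat{g}|^2\,\mathrm{d}\xi \;\lesssim\; \|g\|_{L^2(\R^m)}^2 + \sum_{|\alpha|=|\beta|} \|\partial^{\alpha} g\|_{L^2(\R^m)}^2,
\]
which matches the right-hand side of \eqref{barron_char} once $|\partial^{\beta}g|^2$ is interpreted as the aggregate of squared derivatives of order $|\beta|$. Since $g\in C^{\beta}_0(\R^m)$ has compact support and continuous derivatives up through order $|\beta|$, all the $L^2$ norms appearing on the right are finite.

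There is no genuine obstacle to this argument; it is essentially a bookkeeping exercise balancing the Fourier weight $(1+|\xi|)^{2s-2|\beta|}$ against the $r^{m-1}$ volume factor in polar coordinates, which is exactly what forces the threshold $|\beta|>m/2+s$. The only step requiring some care is the passage between the Fourier-side weight $(1+|\xi|)^{2|\beta|}$ and the physical-side quantity $|\partial^{\beta}g|^2$ via Plancherel and the multinomial expansion, but this is standard once the multi-index notation is interpreted consistently. Compact support of $g$ is not strictly needed for the estimate itself (only $g\in H^{|\beta|}(\R^m)$ is used), but it trivially secures the finiteness of the right-hand side and justifies the absence of any extension procedure.
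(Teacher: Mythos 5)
Your proof is correct, and it is essentially the canonical argument: the paper itself gives no proof of this lemma (it is quoted from the cited reference), and the weighted Cauchy--Schwarz split $(1+|\xi|)^s|\widehat{g}| = (1+|\xi|)^{s-|\beta|}\cdot(1+|\xi|)^{|\beta|}|\widehat{g}|$ followed by polar coordinates and Plancherel is exactly the standard route, with the threshold $|\beta|>m/2+s$ emerging sharply from the integrability of $r^{m-1}(1+r)^{2s-2|\beta|}$. You are also right to flag that the right-hand side of \eqref{barron_char} as literally written (a single multi-index derivative $\partial^{\beta}g$) must be read as the aggregate of all order-$|\beta|$ derivatives (or at least the pure derivatives $\partial_i^{|\beta|}g$, via $(\sum_i\xi_i^2)^{|\beta|}\leq m^{|\beta|-1}\sum_i\xi_i^{2|\beta|}$), since one mixed derivative alone does not control the Fourier weight; this is a looseness in the paper's notation rather than a gap in your argument.
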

 More characterizations and properties can be found in various papers including \cite{Barron_1, Barron_2, spectral_barron_2}. Note that, if the function is defined in a bounded domain $\Omega$ rather than the whole space $\R^m$, we need to extend the functions from $\Omega$ to $\R^m$. This can be done by a continuous extension argument such as the Whitney extension theorem which is encapsulated in the following theorem (see, e.g., \cite{barron_ext}).
 \begin{lemma}\label{extension_thm}
    If $g\in C^{\beta}(\overline{\Omega})$, then for a closed set $\Gamma\subset\R^m$  satisfying $\Omega\subset\subset\Gamma$, there exists an extension $g_e\in C^{\beta}_0(\R^m)$ such that $g=g_e$ in $\overline{\Omega}$ and $g_e=0$ in $\R^m\setminus\Gamma$.         
 \end{lemma}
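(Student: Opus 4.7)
My plan is to combine a classical extension theorem with a smooth cutoff. First, I would invoke the Whitney (or Stein) extension theorem to produce a function $\tilde g\in C^{\beta}(\R^m)$ agreeing with $g$ on $\overline\Omega$; this step is the one that requires the ambient domain $\Omega$ to have enough boundary regularity (Lipschitz is more than sufficient), which is the standing hypothesis implicit in the Barron-space setup of Lemma \ref{diff_char}. Next, from the compact containment $\overline\Omega\subset\subset\Gamma$ I would extract a positive distance $\delta:=\operatorname{dist}(\overline\Omega,\R^m\setminus\Gamma)>0$, pick an intermediate open set $U$ with $\overline\Omega\subset U\subset\subset\Gamma$, and mollify $\mathbf{1}_U$ at a scale smaller than $\delta/2$ to produce a cutoff $\chi\in C_0^{\infty}(\R^m)$ with $0\le\chi\le 1$, $\chi\equiv 1$ on a neighborhood of $\overline\Omega$, and $\operatorname{supp}\chi\subset\Gamma$.

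With these two ingredients in hand, I would set $g_e:=\chi\tilde g$ and verify the three conclusions of the lemma, all of which are routine. Membership in $C^{\beta}_0(\R^m)$ follows from the Leibniz rule, since the product of a $C^{\beta}$ function with a compactly supported $C^{\infty}$ function has all derivatives of order up to $\beta$ continuous and supported in $\operatorname{supp}\chi\subset\Gamma$. The identity $g_e=g$ on $\overline\Omega$ follows because $\chi\equiv 1$ and $\tilde g=g$ there, and $g_e\equiv 0$ on $\R^m\setminus\Gamma$ is immediate from $\operatorname{supp}\chi\subset\Gamma$.

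The only nontrivial step is the extension in the first paragraph, since Whitney's theorem in full generality demands compatibility conditions on the $\beta$-jet of $g$ along $\partial\Omega$. For $g\in C^{\beta}(\overline\Omega)$ on a domain with Lipschitz boundary these compatibility conditions are automatic, and Stein's reflection construction, glued across a local flattening of $\partial\Omega$ via a finite partition of unity subordinate to a boundary cover, supplies a bounded linear extension operator satisfying $\|\tilde g\|_{C^{\beta}(\R^m)}\lesssim\|g\|_{C^{\beta}(\overline\Omega)}$. If one later wanted quantitative control of $\|g_e\|_{C^\beta}$, for instance in order to feed $g_e$ back into Lemma \ref{diff_char}, it would follow at once from this estimate together with the bound $\|\chi\|_{C^\beta}\lesssim 1$ inherited from the mollification construction, giving $\|g_e\|_{C^\beta(\R^m)}\lesssim\|g\|_{C^\beta(\overline\Omega)}$ with an implicit constant depending only on $\Omega$, $\Gamma$, and $\beta$.
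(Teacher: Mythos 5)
The paper does not actually prove this lemma; it simply cites it as a known form of the Whitney extension theorem (``see, e.g., \cite{barron_ext}''), so there is no in-paper argument to compare against. Your proof is the standard and correct one: Whitney/Stein extension to $C^{\beta}(\R^m)$ followed by multiplication with a smooth cutoff $\chi$ that is $1$ near $\overline{\Omega}$ and supported in $\Gamma$, and you rightly flag the only genuine hypothesis being used, namely enough boundary regularity of $\Omega$ (or validity of the Whitney jet compatibility conditions) for the extension step --- an assumption the paper leaves implicit.
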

    
In modern machine learning theory, the Barron space is sometimes defined in a different way, using a probabilistic integral representation \cite{rade_upper_2, barron_def_1}. More precisely, let us consider a function $g:\Omega\rightarrow\R$ with the following integral representation
\begin{equation}\label{int_rep}
    g(\om)=\int_{\R\times\R^m\times\R}a\sigma(b\cdot\om+c)\rho({\rm{d}}a,{\rm{d}}b,{\rm{d}}c),
\end{equation}
where $\rho$ is a probability distribution on $\R\times\R^m\times\R$ and $\sigma$ is an activation function. Then we define the norm of such function as
\[
    \|g\|_{\mathcal{W}^s(\Omega)}=\inf_{\rho}\left(\mathbb{E}_{\rho}[|a|^s(\|b\|_1+|c|)^s]\right)^{1/s},
\]
where the infimum is taken over all $\rho$ with the representation \eqref{int_rep}. The probabilistic Barron space $\mathcal{W}^s(\Omega)$ is defined as the class of continuous functions which can be represented by \eqref{int_rep} with finite Barron norm $\|\cdot\|_{\mathcal{W}^s(\Omega)}$. This is less explicit than the Fourier-based characterization, but it is known to contain more functions that can be efficiently approximated by two-layer neural networks \cite{tight_1, tight_2}. 

The regularity we will mainly concern in this paper is $\mathcal{B}^2(\Omega)$, and hence, we shall denote $\mathcal{B}(\Omega)=\mathcal{B}^2(\Omega)$ henceforth.
In several papers, it was shown that $\mathcal{B}^{s+1}(\Omega)$ is continuously embedded into $\mathcal{W}^s(\Omega)$ for all $s\in\mathbb{N}$ \cite{Barron_1,FNM}, and $\mathcal{W}^1(\Omega)=\mathcal{W}^2(\Omega)=\mathcal{W}^3(\Omega)=\cdots=\mathcal{W}^{\infty}(\Omega)$ provided that $\sigma$ is the ReLU activation function \cite{rade_upper_2}. Therefore we will use the single notation $\mathcal{W}(\Omega)$ to denote the probabilistic Barron space. From the above properties, it follows that $\mathcal{B}(\Omega)\hookrightarrow\mathcal{W}(\Omega)$.

We begin with the following regularity result which identifies a sufficient condition for our target coefficient $\alpha^*$ to be contained in the Barron space. To do this, we need to assume the following.

\begin{assumption}\label{f_ass_2}
    For a compact set $\Omega\subset\R^m$, $f(x,\om)\in L^1(D;C^{\beta}(\overline{\Omega}))$ with $|\beta|>m/2+2$.    
\end{assumption}
\begin{proposition}\label{main_reg}
    Suppose that Assumption \ref{f_ass_2} holds. Then each target coefficient (finite element coefficient) $\alpha^*_j$ is contained in $\mathcal{B}(\Omega)$ for $j=1,\cdots, N_h$ with the estimate
    \[
    \|\alpha^*_j\|_{\mathcal{B}(\Omega)}\lesssim \kappa(A)^{d/2}.
    \]
\end{proposition}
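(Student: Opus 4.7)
The plan is to reduce the Barron regularity of $\alpha_j^*$ to that of the load vector entries $F_k$ and then to absorb the linear algebra through the spectral properties of $A$. Since $A$ is independent of $\omega$, the identity $A\alpha^*(\omega)=F(\omega)$ gives the pointwise formula $\alpha_j^*(\omega)=\sum_{k=1}^{N_h}(A^{-1})_{jk}F_k(\omega)$, where $F_k(\omega)=\int_D f(x,\omega)\phi_k(x)\dx$. Thus it suffices to bound each $\|F_k\|_{\mathcal{B}(\Omega)}$ and then control the linear combination via the $\ell^2$ operator norm of $A^{-1}$. In this self-adjoint setting $A=S$ is symmetric positive-definite, so $\|A^{-1}\|_2=1/\lambda_{\min}$, which by \eqref{inv_min_eigen} satisfies $1/\lambda_{\min}\lesssim \kappa(A)^{d/2}$.

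For the bound on each $F_k$, I would first differentiate under the integral sign, using Assumption \ref{f_ass_2}, to conclude that $F_k\in C^{\beta}(\overline{\Omega})$ with
\[
\|F_k\|_{C^{\beta}(\overline{\Omega})}\lesssim \|\phi_k\|_{L^{\infty}(D)}\,\|f\|_{L^1(\mathrm{supp}\,\phi_k;\,C^{\beta}(\overline{\Omega}))}.
\]
Applying Lemma \ref{extension_thm} extends $F_k$ to $F_{k,e}\in C^{\beta}_0(\R^m)$ supported in a fixed compact $\Gamma\supset\supset\Omega$; then invoking Lemma \ref{diff_char} with $s=2$ (permissible since $|\beta|>m/2+2$) yields
\[
\|F_k\|_{\mathcal{B}(\Omega)}\le \gamma_2(F_{k,e})\lesssim \Big(\int_{\Gamma}\big(|F_{k,e}|^2+|\partial^{\beta}F_{k,e}|^2\big)\,\mathrm{d}\omega\Big)^{1/2}\lesssim \|F_k\|_{C^{\beta}(\overline{\Omega})}.
\]
Using the uniform bound $\|\phi_k\|_{L^{\infty}(D)}\lesssim 1$ together with the shape-regularity fact that each $x\in D$ lies in the support of only $O(1)$ basis functions, summing in $k$ gives $\sum_{k=1}^{N_h}\|F_k\|_{\mathcal{B}(\Omega)}\lesssim \|f\|_{L^1(D;\,C^{\beta}(\overline{\Omega}))}$, with a constant independent of $h$.

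To close the argument, I apply Cauchy--Schwarz to the $j$-th row of $A^{-1}$:
\[
\|\alpha_j^*\|_{\mathcal{B}(\Omega)}\le\sum_{k=1}^{N_h}|(A^{-1})_{jk}|\,\|F_k\|_{\mathcal{B}(\Omega)}\le \|A^{-1}\|_{2}\Big(\sum_k\|F_k\|_{\mathcal{B}(\Omega)}^2\Big)^{1/2}\le \|A^{-1}\|_{2}\sum_k\|F_k\|_{\mathcal{B}(\Omega)}\lesssim \kappa(A)^{d/2}.
\]
The step I expect to be the main obstacle is precisely this last chain: a naive $\ell^1$ bound $\sum_k|(A^{-1})_{jk}|\le \sqrt{N_h}\,\|A^{-1}\|_2$ would produce an extra $\sqrt{N_h}\simeq h^{-d/2}\lesssim \kappa(A)^{d/4}$ factor and spoil the claimed exponent. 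Avoiding this requires the locality of the FEM basis, which already makes $\sum_k\|F_k\|_{\mathcal{B}(\Omega)}$ bounded independently of $N_h$, so that Cauchy--Schwarz against the $\ell^2$ norm of the row of $A^{-1}$ produces exactly the sharp $\|A^{-1}\|_2\lesssim\kappa(A)^{d/2}$ factor.
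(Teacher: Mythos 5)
Your argument is correct and follows essentially the same route as the paper's proof: both write $\alpha_j^*=\sum_k (A^{-1})_{jk}F_k$, bound the entries of $A^{-1}$ by $\lambda_{\min}^{-1}\lesssim\kappa(A)^{d/2}$ via \eqref{inv_min_eigen}, use the locality of the finite element basis to keep the relevant sums independent of $N_h$, and pass to the Barron norm through Lemmas \ref{extension_thm} and \ref{diff_char}. The only difference is organizational: the paper collects the $j$-th row of $A^{-1}$ into a kernel $B_j(x)=\sum_k(A^{-1})_{jk}\phi_k(x)$, bounds it in $L^\infty$, and then estimates the Fourier transform of $\int_D f_{e_x}(x,\cdot)B_j(x)\dx$ directly, whereas you bound each $\|F_k\|_{\mathcal{B}(\Omega)}$ first and then sum.
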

\begin{proof}
By recalling the definition of the finite element coefficients $\alpha^*$, for each $j\in N_h$ 
\[
    \alpha^*_j(\om)=\tilde{a}^j_1F_1(\om)+\cdots+\tilde{a}^j_{N_h}F_{N_h}(\om)=\int_Df(x,\om)[\tilde{a}^j_1\phi_1(x)+\cdots+\tilde{a}^j_{N_h}\phi_{N_h}(x)]\dx=:\int_Df(x,\om)B_j(x)\dx,
\]
where $(\tilde{a}^j_1,\cdots,\tilde{a}^j_N)$ denotes the $j$-th row  of $A^{-1}$. Since the largest component of the inverse of $A$ is bounded above by $\lambda_{\min}^{-1}$ (see, e.g., \cite{mat_anal}), from \eqref{inv_min_eigen}, we see that $|B_j|\lesssim \kappa(A)^{d/2}$. 

Next, by Assumption \ref{f_ass_2} and Lemma \ref{extension_thm}, there exists an extension $f_{e_x}\in C^{\beta}_0(\R^m)$ for each $x\in D$. We then define an extension of $\alpha^*_j$ by
    \[
        \alpha^*_{j,e}(\om)=\int_D f_{e_x}(x,\om)B_j(x)\dx.
    \]
By Fubini's theorem, we have
\begin{align*}
    \widehat{\alpha^*_{j,e}}(\xi) &=\int_{\R^m}\bigg[\int_Df_{e_x}(x,\om)B_j(x)\dx\bigg]e^{-i\xi\cdot\om}\dom\\
    &\lesssim \kappa(A)^{d/2}\int_D\int_{\R^m}f_{e_x}(x,\om)e^{-i\xi\cdot\om}\dom\dx=\kappa(A)^{d/2}\int_D{\widehat{f_{e_x}}}(x,\xi)\dx.
\end{align*}
    
Again, by Fubini's theorem together with \eqref{barron_char} and the continuity of the extension $e_x$, it follows that
\begin{align*}
        \gamma_s(\alpha^*_{j,e})
        &=\int_{\R^m}(1+|\xi|)^s|\widehat{\alpha^*_{j,\tilde{e}}}(\xi)|\,{\rm{d}}\xi\lesssim \kappa(A)^{d/2}\int_{\R^m}(1+|\xi|)^s\int_D\bigg|\widehat{f_{e_x}}(x,\xi)\bigg|\dx\,{\rm{d}}\xi\\
        &\lesssim \kappa(A)^{d/2}\int_D\int_{\R^m}(1+|\xi|)^s|\widehat{f_{e_x}}(x,\xi)|\,{\rm{d}}\xi\dx\lesssim \kappa(A)^{d/2}\int_D\gamma_s(f_{e_x})\dx\lesssim \kappa(A)^{d/2}\int_D\|f\|_{C^{\beta}(\overline{\Omega})}\dx.
    \end{align*}
From Assumption \ref{f_ass_2}, we can conclude that $\alpha^*_{j}\in B(\Omega)$ for all $j=1,\cdots,N_h$.
\end{proof}

Based on Proposition \ref{main_reg}, henceforth, we will assume that our target coefficient satisfies $\alpha^*_j\in B(\Omega)$ for all $1\leq j\leq N_h$. Next, let us define an ansatz space for approximation. In the previous sections, we used the general class of feed-forward neural networks as approximators. On the other hand, here we use a set of two-layer ReLU neural networks, which is known to effectively approximate the function of Barron type. Specifically, the class of feed-forward neural networks $\NN_n$ defined in Section \ref{subsec:nn} now denotes the family of two-layer ReLU networks from $\Omega$ into $\R^{N_h}$, which contains $n$ neurons in the hidden layer. Also, we shall use the following class of scalar-valued neural networks:
\[
    \NN_{n,2}:=\bigg\{\alpha(\om;\theta)=\frac{1}{n}\sum^n_{j=1}a_j\sigma(b_j\cdot\om+c_j):\theta=(a_j,b_j,c_j)^n_{j=1},\,\,a_j,\,c_j\in\R,\,\,\,b_j\in\R^m\,\,\,\text{for all}\,\,\,1\leq j\leq n\bigg\}.  
\]
Furthermore, for a two-layer neural network with the parameter $\theta=(a_j,b_j,c_j)^n_{j=1}$, we define the so-called path norm by
\[
    \|\theta\|_{\mathcal{P}}:=\frac{1}{n}\sum^n_{j=1}|a_j|(\|b_j\|_1+|c_j|).    
\]
Then we define the main ansatz class in this section which consists of the two-layer ReLU neural networks with the parameters whose path norms are bounded by the Barron norm of the target coefficients:
\[
    \NN^*_{n,2}:=\bigg\{\alpha(\om;\theta)\in\NN_n:\|\theta\|_{\mathcal{P}}\leq 2\max_{1\leq j\leq N_h}\|\alpha_j^*\|_{\mathcal{W}(\Omega)}\bigg\}.  
\]
It is known that if the target function is contained in the Barron space, it can be well approximated by two-layer neural networks without the curse of dimensionality which is presented in the following theorem (see, for example, \cite{Barron_1, rade_upper_2}).
\begin{theorem}\label{barron_approx}
    Let $g\in\mathcal{W}(\Omega)$ and $n\in\mathbb{N}$. Then there exists a two-layer ReLU neural network $g_n(\cdot;\theta)\in\NN_{n,2}$ with $\|\theta\|_{\mathcal{P}}\leq2\|g\|_{\mathcal{W}(\Omega)}$ satisfying
    \[
        \|g(\cdot)-g_n(\cdot;\theta)\|^2_{L^2(\Omega)}\leq\frac{3\|g\|^2_{\mathcal{W}(\Omega)}}{n}.
    \]
\end{theorem}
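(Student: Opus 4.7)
The plan is to prove this via a probabilistic Monte Carlo sampling argument of the Maurey/Barron type, exploiting the integral representation that defines $\mathcal{W}(\Omega)$ together with the 1-Lipschitz property of the ReLU activation. I will first fix a near-optimal probability measure realizing the Barron norm, then build the neural network by empirical averaging of samples drawn from this measure, and finally derandomize via Markov's inequality to produce a single realization satisfying both the error bound and the path-norm constraint.

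First, given $\epsilon>0$, by the definition of $\|g\|_{\mathcal{W}(\Omega)}$ and the ReLU-invariance $\mathcal{W}^1(\Omega)=\mathcal{W}^2(\Omega)$, I select a probability distribution $\rho$ on $\R\times\R^m\times\R$ for which $g(\om)=\int a\sigma(b\cdot\om+c)\rho(\mathrm{d}a,\mathrm{d}b,\mathrm{d}c)$ and simultaneously
\[
    \mathbb{E}_\rho[|a|(\|b\|_1+|c|)]\le \|g\|_{\mathcal{W}(\Omega)}+\epsilon,\qquad \mathbb{E}_\rho[a^2(\|b\|_1+|c|)^2]\le (\|g\|_{\mathcal{W}(\Omega)}+\epsilon)^2.
\]
Then I draw $(a_j,b_j,c_j)_{j=1}^n$ i.i.d.\ from $\rho$ and set $g_n(\om;\theta)=\tfrac{1}{n}\sum_{j=1}^n a_j\sigma(b_j\cdot\om+c_j)\in\NN_{n,2}$, noting $\mathbb{E}[g_n(\om;\theta)]=g(\om)$ pointwise in $\om$.

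Next, by independence across $j$ and the compactness of $\Omega$ (so $\|\om\|_\infty$ is uniformly bounded), together with $|\sigma(b\cdot\om+c)|\le \|b\|_1\|\om\|_\infty+|c|\lesssim \|b\|_1+|c|$, I compute the pointwise variance and integrate:
\[
    \mathbb{E}\|g-g_n(\cdot;\theta)\|_{L^2(\Omega)}^2=\int_\Omega\frac{1}{n}\mathrm{Var}_\rho\!\left(a\sigma(b\cdot\om+c)\right)\dom\le \frac{C}{n}\,\mathbb{E}_\rho[a^2(\|b\|_1+|c|)^2]\le \frac{C(\|g\|_{\mathcal{W}(\Omega)}+\epsilon)^2}{n},
\]
where $C$ absorbs $|\Omega|$ and the diameter of $\Omega$. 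Likewise, the path norm of the sampled parameters satisfies $\mathbb{E}\|\theta\|_\mathcal{P}=\mathbb{E}_\rho[|a|(\|b\|_1+|c|)]\le \|g\|_{\mathcal{W}(\Omega)}+\epsilon$.

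Finally, to derandomize, I apply Markov's inequality twice: the event $\{\|\theta\|_\mathcal{P}>2(\|g\|_{\mathcal{W}(\Omega)}+\epsilon)\}$ has probability at most $1/2$, and the event $\{\|g-g_n\|_{L^2(\Omega)}^2>3\,\mathbb{E}\|g-g_n\|_{L^2(\Omega)}^2\}$ has probability at most $1/3$; their union has probability at most $5/6<1$, so there exists a deterministic realization $\theta$ for which both bounds hold simultaneously. Letting $\epsilon\to 0$ and tracking constants carefully yields the stated bound $\|g-g_n(\cdot;\theta)\|_{L^2(\Omega)}^2\le 3\|g\|_{\mathcal{W}(\Omega)}^2/n$ with $\|\theta\|_\mathcal{P}\le 2\|g\|_{\mathcal{W}(\Omega)}$. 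The main obstacle is pinning down the constant to be exactly $3$: one must be careful that the variance bound uses $\mathcal{W}^2$, the path-norm bound uses $\mathcal{W}^1$, and these are identified via the ReLU homogeneity argument; balancing the Markov thresholds correctly so that both events fit inside probability strictly less than $1$ is what dictates the final numerical constant.
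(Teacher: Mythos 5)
The paper does not prove this theorem; it is quoted as a known result with a pointer to \cite{Barron_1, rade_upper_2}, and your argument is precisely the standard Maurey/Monte--Carlo sampling proof from those references: normalize the representing measure via ReLU homogeneity so that one $\rho$ is simultaneously near-optimal for the first and second moments, sample i.i.d., bound the variance, and derandomize with Markov thresholds $2$ and $3$ so the union of bad events has probability $5/6<1$. So you have supplied, correctly, the proof the paper outsources. Two minor points to tighten: the clean constant $3$ (with no extra factor $C$) requires $\|\om\|_\infty\le 1$ on $\Omega$ and the $L^2(\Omega)$ norm taken with respect to the probability measure $\mathbb{P}_\Omega$, which is the normalization implicit in the cited sources; and the final passage $\epsilon\to 0$ produces a different $\theta_\epsilon$ for each $\epsilon$, so to land exactly on $2\|g\|_{\mathcal{W}(\Omega)}$ and $3\|g\|^2_{\mathcal{W}(\Omega)}/n$ you should extract a convergent subsequence of the (normalized, hence compactly parametrized) networks rather than just "letting $\epsilon\to 0$."
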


Note that the above theorem is for the scalar-valued neural networks. Let us describe the way to apply Theorem \ref{barron_approx} to our case, where the vector-valued neural networks are considered. For the target coefficient  $\alpha^*(\om)=(\alpha^*_i(\om))^{N_h}_{i=1}$ with $\alpha^*_i\in\mathcal{W}(\Omega)$, by Theorem \ref{barron_approx}, there exists two-layer neural networks 
\[
    \alpha^*_{i,n}(\om)=\frac{1}{n}\sum^n_{j=1}a^i_j\sigma(b^i_j\cdot\om+c^i_j)\in\NN^*_{n,2}
\]
for each $i=1,\cdots,\N_h$, satisfying
\[
    \|\alpha^*_i-\alpha^*_{i,n}\|^2_{L^2(\Omega)}\leq\frac{3\|\alpha^*_i\|^2_{\mathcal{W}(\Omega)}}{n}.
\]
Now we define a (vector-valued) two-layer ReLU neural network $\widehat{g}:\Omega\rightarrow\R^{N_h}$ with $nN_h$ nodes such that the weight matrix and the bias vector of the hidden layer is defined by
\[
W^1=(b^1_1,\cdots, b^1_n,\cdots,b^{N_h}_1,\cdots b^{N_h}_n)^T\in\R^{nN_h\times n}
\quad{\rm{and}}\quad
b^1=(c^1_1,\cdots,c^1_n\cdots,c^{N_h}_1,\cdots,c^{N_h}_n)^T\in\R^{nN_h}.
\]
The weight matrix of the second layer is defined by
\[
W^2 = 
\begin{pmatrix}
a^1_1\cdots a^1_n & 0\cdots0 & 0\cdots0 & \cdots &0\cdots0 \\
0\cdots0 &a^2_1\cdots a^2_n & 0\cdots0 & \cdots & 0\cdots0 \\
\vdots & \vdots & \vdots & \ddots & \vdots \\
0\cdots0  & 0\cdots0  & 0\cdots0 &\cdots& a^{N_h}_1\cdots a^{N_h}_n 
\end{pmatrix}.
\]
Then we have
\begin{align*}
    \inf_{g\in\NN_{nN_h}}\|\alpha^*-g\|_{L^1(\Omega)}
    &\lesssim\|\alpha^*-\widehat{g}\|_{L^2(\Omega)}=\left(\|\alpha^*_1-\alpha^*_{1,n}\|^2_{L^2(\Omega)} + \cdots + \|\alpha^*_{N_h}-\alpha^*_{N_h,n}\|^2_{L^2(\Omega)}\right)^{\frac{1}{2}}\\
    %&\lesssim\frac{1}{\sqrt{n}}\left(\|\alpha^*_1\|^2_{\mathcal{W}(\Omega)}+\cdots + \|\alpha^*_{N_h}\|^2_{\mathcal{W}(\Omega)}\right)^{\frac{1}{2}}
    &\lesssim \frac{1}{\sqrt{n}}\left(\|\alpha^*_1\|^2_{\mathcal{B}(\Omega)}+\cdots + \|\alpha^*_{N_h}\|^2_{\mathcal{B}(\Omega)}\right)^{\frac{1}{2}}\lesssim \frac{\kappa(A)^{3d/4}}{\sqrt{n}}.
\end{align*}
Therefore, we obtain the following result.
\begin{proposition}
        Suppose that Assumption \ref{f_ass_2} holds. Then we have
        \begin{equation}\label{barron_approx_1}
        \inf_{\alpha\in\NN_n}\|\alpha^*-\alpha\|_{L^1(\Omega)}\lesssim\frac{\kappa(A)^d}{\sqrt{n}}.
        \end{equation}
\end{proposition}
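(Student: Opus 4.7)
The plan is to reduce the vector-valued approximation problem to $N_h$ scalar Barron approximations and assemble the results into a single vector-valued two-layer network, then track the dependence on $N_h$ and $\kappa(A)$. All the heavy lifting has already been done by the preceding results, so the proof amounts to combining them correctly.

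First, I would invoke Proposition \ref{main_reg} to obtain $\alpha^*_j \in \mathcal{B}(\Omega)$ for each $j=1,\dots,N_h$, with $\|\alpha^*_j\|_{\mathcal{B}(\Omega)} \lesssim \kappa(A)^{d/2}$. Combined with the continuous embedding $\mathcal{B}(\Omega) \hookrightarrow \mathcal{W}(\Omega)$ recalled in the preceding discussion, this yields the same bound for the probabilistic Barron norm $\|\alpha^*_j\|_{\mathcal{W}(\Omega)}$, which is the norm appearing in Theorem \ref{barron_approx}.

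Next, I would apply Theorem \ref{barron_approx} componentwise to produce two-layer ReLU scalar approximants $\alpha^*_{j,n}$ with $n$ hidden neurons and $L^2$-error squared controlled by $3\|\alpha^*_j\|_{\mathcal{W}(\Omega)}^2/n \lesssim \kappa(A)^d/n$. These scalar networks can then be stacked into a single vector-valued two-layer network $\widehat g$ using the explicit block-diagonal assembly described just before the proposition: the hidden layer collects all the scalar hidden-layer weights and biases, and the second-layer weight matrix is block-diagonal so that each scalar output draws from its own cluster of neurons. The resulting object is a two-layer network with $nN_h$ hidden neurons, hence belongs to $\NN_{nN_h}$; a trivial reindexing of the width identifies it with an element of $\NN_n$ in the sense of the ansatz class used in the statement.

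Finally, using $L^2 \hookrightarrow L^1$ on the bounded parameter space $\Omega$ and summing componentwise, I would estimate
\[
\inf_{\alpha \in \NN_n} \|\alpha^* - \alpha\|_{L^1(\Omega)}^2 \;\lesssim\; \sum_{j=1}^{N_h} \|\alpha^*_j - \alpha^*_{j,n}\|_{L^2(\Omega)}^2 \;\lesssim\; \frac{N_h\,\kappa(A)^d}{n}.
\]
To convert the factor $N_h$ into a power of $\kappa(A)$, I would use the condition number lower bound \eqref{cond_lower_bound}, namely $\kappa(A)\gtrsim h^{-2}$, together with $N_h\simeq h^{-d}$, which gives $N_h\lesssim \kappa(A)^{d/2}$. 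The square root then yields a bound of order $\kappa(A)^{3d/4}/\sqrt{n}$, which, since $\kappa(A)\geq 1$, is controlled by $\kappa(A)^d/\sqrt{n}$ as claimed.

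There is no substantive obstacle here; the only minor point that deserves care is the vector assembly step, where one must check that the block-diagonal construction indeed realizes each scalar Barron approximant in the corresponding coordinate of the vector output without cross-contamination and without inflating the width beyond $nN_h$. This is bookkeeping rather than analysis, and the rest of the estimate is a direct invocation of Proposition \ref{main_reg}, the embedding $\mathcal{B}\hookrightarrow\mathcal{W}$, and Theorem \ref{barron_approx}.
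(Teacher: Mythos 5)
Your proposal is correct and follows essentially the same route as the paper: componentwise application of Theorem \ref{barron_approx} via Proposition \ref{main_reg} and the embedding $\mathcal{B}(\Omega)\hookrightarrow\mathcal{W}(\Omega)$, block-diagonal assembly into a vector-valued two-layer network of width $nN_h$, and the conversion $N_h\simeq h^{-d}\lesssim\kappa(A)^{d/2}$ from \eqref{cond_lower_bound} to reach $\kappa(A)^{3d/4}/\sqrt{n}\lesssim\kappa(A)^{d}/\sqrt{n}$. You even make explicit the absorption of the $N_h^{1/2}$ factor and the final relaxation of the exponent, which the paper leaves implicit.
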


Next, we shall investigate the generalization error. To do this, we will use the following result \cite{rade_upper_2, rade_gen}.

\begin{theorem}\label{barron_general}
    For the function class $\mathcal{F}_{Q,1}=\{g(\cdot;\theta)\in\NN_{n,2}:\|\theta\|_{\mathcal{P}}\leq Q\}$ and $\mathcal{F}_{Q,2}=\{g\in\mathcal{W}:\|g\|_{\mathcal{W}}\leq Q\}$, we have
    \begin{equation}\label{gen_rade_thm}
        \RR_M(\mathcal{F}_{Q,j})\leq 2Q\sqrt{\frac{2\log(2m+2)}{M}}\quad{\rm{for}}\,\,\,j=1,2.
    \end{equation}
\end{theorem}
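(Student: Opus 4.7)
The plan is to treat the two classes $\mathcal{F}_{Q,1}$ and $\mathcal{F}_{Q,2}$ in a unified way by writing every element as a (possibly infinite) convex combination of normalized ReLU neurons, then invoking standard Rademacher-complexity machinery (invariance under convex hulls, Ledoux--Talagrand contraction, and Massart's finite maximal inequality).

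\textbf{Step 1: Normalization via positive homogeneity.} For any ReLU neuron $\sigma(b\cdot\omega+c)$ with $\lambda:=\|b\|_1+|c|>0$, positive homogeneity of $\sigma$ gives $\sigma(b\cdot\omega+c)=\lambda\,\sigma(\tilde b\cdot\omega+\tilde c)$ with $\tilde b=b/\lambda$, $\tilde c=c/\lambda$, so $\|\tilde b\|_1+|\tilde c|=1$. Applying this to each neuron of $g(\cdot;\theta)\in\mathcal{F}_{Q,1}$ absorbs $\lambda_j$ into the outer coefficient, turning the path-norm constraint into $\tfrac1n\sum_j|a_j\lambda_j|\leq Q$. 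Hence $g$ lies in $Q\cdot\overline{\mathrm{conv}}(\mathcal{G})$, where
\[
\mathcal{G}=\{\pm\sigma(\tilde b\cdot\omega+\tilde c):\|\tilde b\|_1+|\tilde c|=1\}.
\]
The same rescaling applied inside the integral representation \eqref{int_rep} shows that every $g\in\mathcal{F}_{Q,2}$ likewise lies in $Q\cdot\overline{\mathrm{conv}}(\mathcal{G})$.

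\textbf{Step 2: Reduction to a linear class.} Using the standard facts that $\mathcal{R}_M(Q\cdot\mathcal{F})=Q\,\mathcal{R}_M(\mathcal{F})$ and $\mathcal{R}_M(\overline{\mathrm{conv}}(\mathcal{F}))=\mathcal{R}_M(\mathcal{F})$, we obtain
\[
\mathcal{R}_M(\mathcal{F}_{Q,j})\leq Q\,\mathcal{R}_M(\mathcal{G}),\qquad j=1,2.
\]
Since $\sigma$ is $1$-Lipschitz with $\sigma(0)=0$, the Ledoux--Talagrand contraction principle (together with the symmetry $\pm$ inside $\mathcal{G}$) yields
\[
\mathcal{R}_M(\mathcal{G})\leq 2\,\mathcal{R}_M(\mathcal{L}),\qquad \mathcal{L}:=\{\omega\mapsto \tilde b\cdot\omega+\tilde c:\|\tilde b\|_1+|\tilde c|\leq 1\}.
\]

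\textbf{Step 3: Massart's inequality on the linear class.} Writing each linear functional in $\mathcal{L}$ as the inner product of $(\tilde b,\tilde c)$ with $(\omega_i,1)$, duality between $\ell_1$ and $\ell_\infty$ gives
\[
\mathcal{R}_M(\mathcal{L})=\frac{1}{M}\,\mathbb{E}\Big\|\Big(\sum_{i=1}^M\varepsilon_i\omega_i,\ \sum_{i=1}^M\varepsilon_i\Big)\Big\|_\infty.
\]
Because $\Omega$ is compact, we may assume $\|\omega\|_\infty\leq 1$ (absorbing the diameter into $Q$), so the $2(m+1)$ coordinatewise Rademacher sums each have sub-Gaussian parameter $\sqrt{M}$. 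Massart's finite maximal inequality then gives $\mathcal{R}_M(\mathcal{L})\leq\sqrt{2\log(2m+2)/M}$.

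\textbf{Step 4: Conclusion.} Chaining the three inequalities of Steps 2--3 produces
\[
\mathcal{R}_M(\mathcal{F}_{Q,j})\leq 2Q\sqrt{\frac{2\log(2m+2)}{M}},
\]
for both $j=1,2$, which is \eqref{gen_rade_thm}. The only conceptually delicate point is justifying the normalization--convex-hull representation for the probabilistic Barron class $\mathcal{F}_{Q,2}$, where one must argue that the probability measure $\rho$ in \eqref{int_rep} can be pushed forward to the unit sphere $\{\|\tilde b\|_1+|\tilde c|=1\}$ without changing $g$; this is straightforward using the positive homogeneity of $\sigma$ together with a change of variables in $\rho$. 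Everything else amounts to invoking textbook Rademacher-complexity tools.
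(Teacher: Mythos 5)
The paper does not prove this theorem at all; it is quoted directly from the cited literature (\cite{rade_upper_2, rade_gen}), so there is no in-paper argument to compare against. Your proposal correctly reconstructs the standard proof from those references: homogeneity of the ReLU to normalize neurons onto the $\ell_1$-sphere, convex-hull invariance and scaling of the Rademacher complexity, the factor-$2$ Ledoux--Talagrand contraction for the absolute-value version of $\RR_M$, and Massart's lemma over the $2(m+1)$ signed coordinates, which is exactly where the $2\log(2m+2)$ comes from. The only loose point is the parenthetical claim that the diameter of $\Omega$ can be ``absorbed into $Q$'' --- the stated bound has no diameter dependence, so one must actually assume $\Omega\subset[-1,1]^m$ (as the cited works and the paper's own examples do); with that assumption made explicit, the argument is complete.
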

Let us write $A=(a_{ij})$ and $a_{\max}=\max_{i,j}|a_{ij}|$. Since the finite element matrix $A$ is symmetric and positive-definite, there exists some $k\in\mathbb{N}$ such that $|a_{ij}|\leq a_{kk}$ for any $i$, $j=1,2,\cdots, N_h$ (see, e.g., \cite{mat_anal}). Therefore, from the estimate \eqref{inv_min_eigen}, it follows that $a_{\max}\leq |a_{kk}|= e_k^TAe_k\leq\lambda_{\max}\lesssim\kappa(A)^{1-d/2}$, where $e_k$ denotes the $k$-th standard basis. In order to apply Theorem \ref{barron_general} to our case, let us first define the following function classes:
\begin{align*}
    &\mathcal{F}_n:=\{|A\alpha-F|:\alpha=(\alpha_j)^{N_h}_{j=1},\,\,\,\alpha_j\in\NN^*_{n,2}\,\,{\rm{for}}\,\,j=1,\cdots,N_h\},\\
    &\mathcal{F}_{n,1}:=\{|A\alpha|:\alpha=(\alpha_j)^{N_h}_{j=1},\,\,\,\alpha_j\in\NN^*_{n,2}\,\,{\rm{for}}\,\,j=1,\cdots,N_h\},\\
    &\mathcal{F}^{i,j}_{n,1}:=\{|a_{ij}g|:g\in \NN^*_{n,2}\},\quad\mathcal{F}^{\max}_{n,1}:=\{a_{\max}g:g\in\NN^*_{n,2}\}.
\end{align*}
We first note that $\mathcal{F}^{\max}_{n,1}\subset\{g\in\NN_{n,2}:\|g\|_{\mathcal{P}}\leq 2a_{\max}\|\alpha^*\|_{\mathcal{W}(\Omega)}\}$. Since the Rademacher complexity of a set of a single function is zero, by Theorem \ref{barron_general}, Proposition \ref{main_reg} and Talagrand’s contraction principle (see, e.g., \cite{Rade_2}), we have that
\begin{equation}\label{barron_gen_est}
    \RR_M(\mathcal{F}_n)\leq \RR_M(\mathcal{F}_{n,1})\leq \sum^{N_h}_{i,j=1}\RR_M(\mathcal{F}^{i,j}_{n,1})\leq(N_h)^2\mathcal{R}_M(\mathcal{F}^*_{\max})\lesssim\frac{\kappa(A)^{1+d}}{\sqrt{M}}.
\end{equation}
\begin{comment}
For the second term, under Assumption \ref{f_ass_2}, we note that for any element $g\in\mathcal{F}^j_{n,2}$, $\|g\|_{\mathcal{B}(\Omega)}\leq\|f\|_{L^1(D;C^{\beta}(\overline{\Omega}))}$ for all $1\leq j\leq N_h.$ By Theorem \ref{barron_general}, this leads us to the estimate
\[
     ({\rm{II}})\leq \frac{h^{-d}}{\sqrt{M}}.
\]

Therefore, we finally have that
\begin{equation}\label{barron_gen_est}
    \mathcal{R}(\mathcal{F}^*_n)\lesssim\frac{h^{-d}+h^{-2-3d}}{\sqrt{M}}  
\end{equation}
\end{comment}
Therefore, from \eqref{total_est_1}, \eqref{barron_approx_1} and \eqref{barron_gen_est}, we finally obtain the following theorem, which is the main result of this paper.
\begin{theorem}\label{final_thm}
    Let Assumption \ref{f_ass_2} hold. If we use the $(P \ell)$-finite element approximation, the predicted solution $u_{h,n,M}$ by the FEONet satisfies the following error estimate:
    \begin{equation}\label{final_err_est}
        \mathbb{E}\left[\|u-u_{h,n,M}\|_{L^1(\Omega;L^2(D))}
        \right]  \lesssim h^{\ell+1}+\frac{\kappa(A)^{1+d}}{\sqrt{n}}+\frac{\kappa(A)^{1+3d/2}}{\sqrt{M}}.
    \end{equation}
%{\color{red}
%    \[
%        {\rm{(Original}}\,\,{\rm{version)}}\,\,\,
%        \mathbb{E}\left[\|u-u_{h,n,M}\|_{L^1(\Omega;H^1(D))}
%        \right]  \lesssim h+h^{-2-4d}\sqrt{1+h^{-2}}\left(\frac{1}{\sqrt{n}}+\frac{1+h^{2+2d}}{\sqrt{M}}\right).
%    \]
%}
\end{theorem}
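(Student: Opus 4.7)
The plan is to simply assemble the three ingredients already prepared in this section. The starting point is the estimate \eqref{total_est_1}, which, for the self-adjoint case and the $(P\ell)$-approximation, splits the total error into the finite element contribution $h^{\ell+1}$, a $\kappa(A)$-weighted best-approximation term $\inf_{\alpha\in\NN_n}\|\alpha-\alpha^*\|_{L^1(\Omega)}$, and a $\kappa(A)^{d/2}$-weighted Rademacher complexity term $\RR_M(\mathcal{F}^{\LL}_n)$. Under Assumption \ref{f_ass_2}, Proposition \ref{main_reg} delivers that each target coefficient $\alpha^*_j$ lies in $\mathcal{B}(\Omega)\hookrightarrow\mathcal{W}(\Omega)$ with $\|\alpha^*_j\|_{\mathcal{B}(\Omega)}\lesssim\kappa(A)^{d/2}$, which is the regularity input needed to trigger the Barron-type bounds.

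Next I would use the approximation bound \eqref{barron_approx_1}, obtained by applying Theorem \ref{barron_approx} componentwise and assembling the scalar two-layer networks into a single vector-valued network in $\NN_n$, to control the middle term on the right-hand side of \eqref{total_est_1}. Multiplying by the prefactor $\kappa(A)$ yields $\kappa(A)^{1+d}/\sqrt{n}$, which is the $n$-dependent term appearing in \eqref{final_err_est}. Conceptually this is where the dimension-independent $1/\sqrt{n}$ rate of Barron approximation is transferred to the FEONet prediction, with the price paid through the condition number.

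For the generalization part, I would invoke the estimate \eqref{barron_gen_est}, which bounds $\RR_M(\mathcal{F}_n)$ by $\kappa(A)^{1+d}/\sqrt{M}$. This is obtained by bounding entries of $A$ through $a_{\max}\leq\lambda_{\max}\lesssim\kappa(A)^{1-d/2}$, using the $N_h^2\simeq\kappa(A)^d$ component contributions together with Talagrand's contraction principle, and finally applying Theorem \ref{barron_general} to the path-norm-controlled class $\NN^*_{n,2}$. Substituting into the last term of \eqref{total_est_1} and absorbing the $\kappa(A)^{d/2}$ prefactor gives $\kappa(A)^{1+3d/2}/\sqrt{M}$, matching the last term of \eqref{final_err_est}. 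A small technical point to verify in passing is that the ansatz restriction in \eqref{barron_approx_1} (the path-norm controlled class $\NN^*_{n,2}$) is precisely the class over which the Rademacher estimate \eqref{barron_gen_est} is valid, so that Theorem \ref{sym_gen_error} can be applied consistently on the same hypothesis space.

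The hard part of the argument is not in this final assembly step but rather in the lemmas feeding into it, namely Proposition \ref{main_reg} (propagation of Barron regularity of $f$ through the linear map $A^{-1}$ with quantitative control by $\kappa(A)^{d/2}$) and the derivation of \eqref{barron_gen_est} (the $\kappa(A)^{1+d}$ scaling of the Rademacher complexity of $\mathcal{F}_n$). Once those are in hand, the present theorem reduces to a substitution, and I would conclude simply by chaining \eqref{total_est_1}, \eqref{barron_approx_1}, and \eqref{barron_gen_est}.
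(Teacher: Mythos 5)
Your proposal is correct and matches the paper exactly: the paper likewise obtains the theorem by chaining \eqref{total_est_1}, \eqref{barron_approx_1}, and \eqref{barron_gen_est}, with the same exponent bookkeeping ($\kappa(A)\cdot\kappa(A)^{d}/\sqrt{n}$ and $\kappa(A)^{d/2}\cdot\kappa(A)^{1+d}/\sqrt{M}$). Your added remark about the consistency of the path-norm-controlled ansatz class is a reasonable point the paper leaves implicit, but the argument is otherwise identical.
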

\begin{remark}
    As described earlier, in the method we propose, we first choose the finite element parameter $h>0$ and make a triangulation. During this process, the first term in the error \eqref{final_err_est} can be reduced as desired, and better convergence can be achieved when using higher-order methods. After $h>0$ is fixed and basis functions are set, we proceed with the neural network approximation for the target coefficients. Since $\kappa(A)\approx h^{-2}$, if $h>0$ is small, the second and third terms of the error in \eqref{final_err_est} may increase. However, by increasing the number of neurons in the hidden layer ($n\rightarrow\infty$) and using more training samples for training ($M\rightarrow\infty$), we can reduce these error terms as much as we want. Additionally, by using the preconditioning techniques, we can significantly reduce the condition number. This provides a theory-guided strategy to reduce the second and third error terms in \eqref{final_err_est}, ensuring faster convergence. This will be explicitly confirmed through numerical experiments in the next section.
\end{remark}

\section{Numerical experiments}\label{sec:num_exp}
In this section, we shall perform some numerical experiments to confirm the theoretical findings in the previous sections. Motivated by \cite{bar2019learning}, we randomly generate input samples (in this paper, external forces) of the form
\begin{equation}\label{input_type}
f(x)=n_1\sin(m_1\cdot x)+n_2\cos(m_2\cdot x)\quad x\in D,
\end{equation}
where the random parameters $n_1$, $n_2$, $m_1$ and $m_2$ are sampled from uniform distributions. As described in Section \ref{subsec:desc}, we train the FEONet using these random samples. Note that we don't need any precomputed input-output $(f, u)$ pairs for training. We then evaluate the performance of the trained model using different random samples that were not used in the training. For the corresponding true solutions for the test data, we computed the finite element solutions on a sufficiently fine mesh, using the finite element software package FEniCS \cite{fenics}. All the computations were conducted using the Intel Xeon Cascade Lake (Gold 6226R) processor and TESLA V100 GPU.

\subsection{Convergence against the number of training samples and model size}\label{sec_num_sample}

\begin{figure}
\begin{center}
\includegraphics[width=\textwidth]{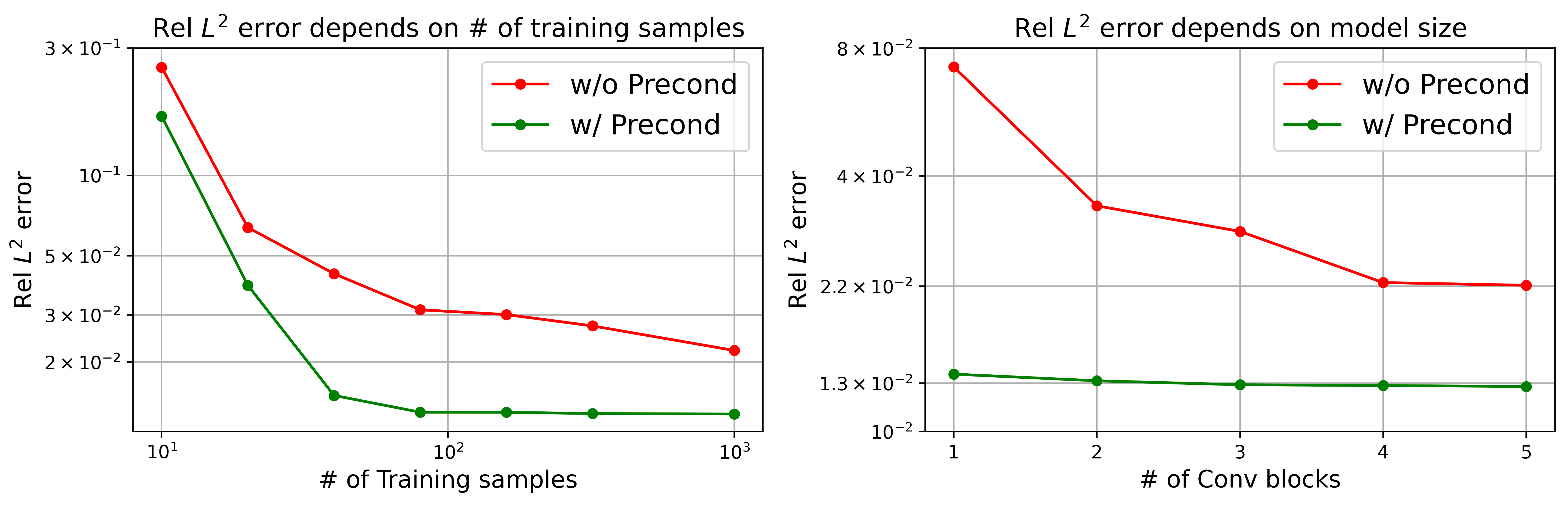}
\end{center}
\caption{The relative $L^2$ errors resulting from varying the number of training samples and the model size.}
\label{fig:sample_modelsize}
\end{figure}

We first demonstrate that for fixed $h>0$, the error $\|u_h-u_{h,n,M}\|$ decreases as the model size $n$ and the number of input samples $M$ increase, which was theoretically proved in Theorem \ref{main_thm_whole}. To do this,
we consider the 2D Poisson equation within the domain $D$, which is a square with a hole (see the second domain in Figure \ref{fig:meshes}), i.e.,
\begin{equation}\label{eq:poisson}
    \begin{aligned}
        - \Delta u(x,y)&=f(x,y), && (x,y) \in D,\\
        u(x,y)&=0, && (x,y) \in \partial D.
    \end{aligned}
\end{equation}
Figure \ref{fig:sample_modelsize} illustrates the experimental results, where we can see a relationship between relative errors and the approximation parameters; the model size $n\in\mathbb{N}$ and the number of training samples $M\in\mathbb{N}$. We use the convolutional neural networks (CNN) as a baseline model and train the model with the gradually increasing number of convolutional blocks. We also conduct the training with a varying number of training samples from 10 to 20, 40, 80, 160, 320, and up to 1000. As we expected, we can confirm the decreasing tendency of errors as depicted with red lines in Figure \ref{fig:sample_modelsize}.

Secondly, what we also proved in the theory was that the convergence depends on the condition number $\kappa(A)$ (e.g. Theorem \ref{approx_error}, Theorem \ref{coef_gen_err}, \eqref{coeff_est_sym} and \eqref{total_est_1}). This motivates us to train the FEONet with the preconditioned loss
\begin{equation}\label{precond_loss}
    \mathcal{K}(\alpha)=\|P^{-1}A\alpha(\om)-P^{-1}F(\om)\|_{L^1(\Omega)}\quad{\rm{and}}\quad \mathcal{K}^M(\alpha)=\frac{|\Omega|}{M}\sum_{i=1}^M|P^{-1}A\alpha(\om_i)-P^{-1}F(\om_i
    )|,
\end{equation}
so that all of $\kappa(A)$ in the theoretical results can be replaced by the term $\kappa(P^{-1}A)$ which is known to be smaller. In our experiment, for the preconditioning, we used the Sparse Approximate Inverse (SPAI) preconditioner \cite{chow1998approximate}. The experimental results for this are demonstrated in Figure \ref{fig:sample_modelsize} with green lines. As we expected from theory, the errors are reduced and it has been shown that we can train the model with smaller numbers of training samples and convolutional blocks, and hence we can improve the training efficiency if we use the preconditioning. This provides us with a theory-guided classical numerical analysis strategy to improve the performance of the proposed machine-learning model.

\subsection{Convergence against the number of element}

\begin{figure}[t]
\begin{center}
\includegraphics[width=\textwidth]{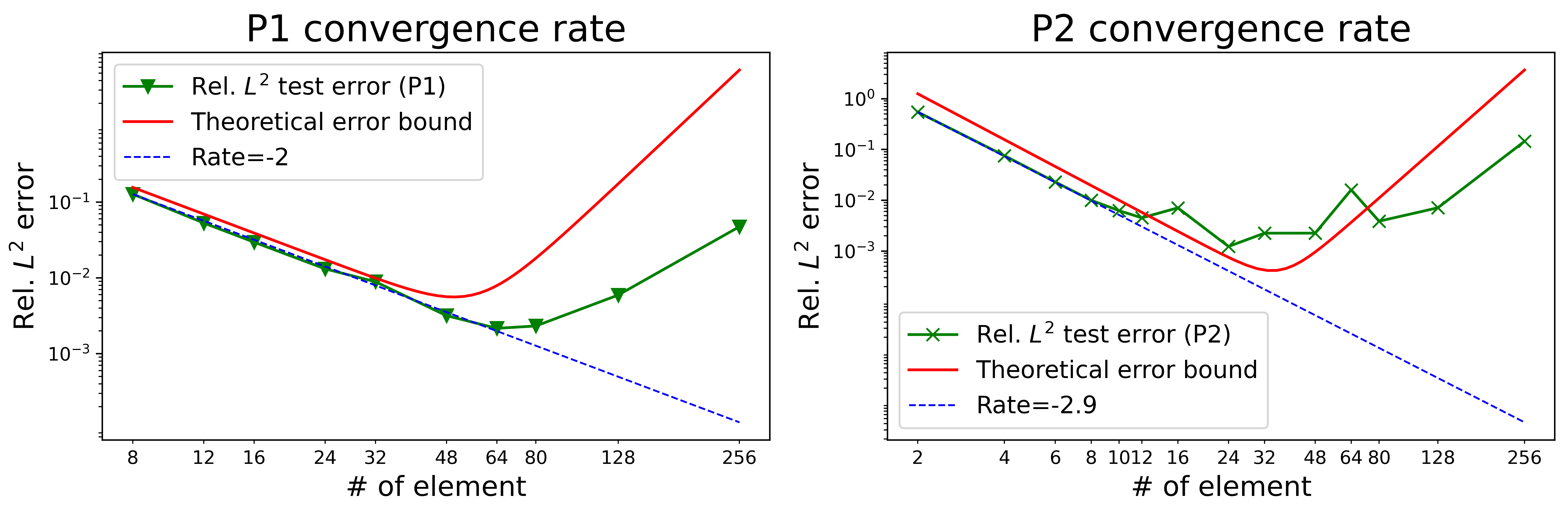}
\end{center}
\caption{The relative $L^2$ errors against the number of elements.}
\label{fig:rate}
\end{figure}
% \begin{figure}[H]
% \begin{center}
% \includegraphics[width=0.8\textwidth]{images/3_1d_convergence_large.png}
% \end{center}
% \caption{The relative $L^2$ errors by varying the number of elements.}
% \label{fig:rate}
% \end{figure}

Next, we shall confirm the theoretical result presented in Theorem \ref{final_thm}. Note that the input samples of type \eqref{input_type} satisfy Assumption \ref{f_ass_2}. For this experiment, we consider the convection-diffusion equation
\begin{equation}\label{eq:conv_diff}
  \begin{aligned}
    -0.1 u_{xx}-u_x&=f(x), && x \in [-1,1],\\
    u(-1)&=u(1)=0.
  \end{aligned} 
\end{equation} 
The experimental results concerning the quantitative relationship between test errors and the number of elements are depicted in green in Figure \ref{fig:rate}; one for piecewise linear approximation and the other for piecewise quadratic approximation. As we can see in Figure \ref{fig:rate}, up to a certain point, as the degree of freedom increases, the relative $L^2$ error decreases following the convergence rate which is known in the classical theory of FEM ($-2$ for P$1$ approximation and $-3$ with P$2$ approximation), as indicated by dotted lines. But after that point, we can see that the error increases again. This phenomenon was predicted in the theoretical result presented in Theorem \ref{main_thm_whole}. Thanks to the first term in \eqref{final_err_est}, the error goes down when it reaches a local minimum. However, as the number of elements further increases, the condition number $\kappa(A)$ goes up, and hence, so does the relative $L^2$ error. The theoretical error bounded for both P$1$ and P$2$ obtained in Theorem \ref{main_thm_whole} are drawn in red in Figure \ref{fig:rate}, and we can confirm that the experimental results have a similar tendency as the theoretically predicted results.

Secondly, according to Theorem \ref{main_thm_whole}, when the number of elements is large, we can reduce the total error by using the preconditioning technique. As discussed in Section \ref{sec_num_sample}, we compare the results for the original FEONet, and the one using the preconditioned loss \eqref{precond_loss}, and the experimental result is presented in Figure \ref{fig:solution_profile}. When the number of elements is relatively small, the preconditioning slightly improves the performance. However, for the case when the number of elements is large, using the preconditioner for the training significantly improves the overall performance of the model. This also matches with the theoretical findings in the previous sections.

\section{Conclusion}
In this paper, we have investigated the convergence of approximate solutions predicted by the FEONet proposed in \cite{FEONet}, and derived an error estimate, simultaneously examining the whole approximation parameters $h>0$ and $n$, $M\in\mathbb{N}$. Based on the eigenvalue estimates from the classical FEM theory, we proved the convergence of the numerical solution for general linear second-order PDEs, and the convergence depends on the condition number of the finite element matrix. Furthermore, for a self-adjoint case, a novel regularity theory for the neural network approximation was proposed, from which we obtained the complete error estimate. Finally, we conducted some numerical experiments which support the theoretically shown results in the paper.

\begin{figure}
\begin{center}
\includegraphics[width=0.9\textwidth]{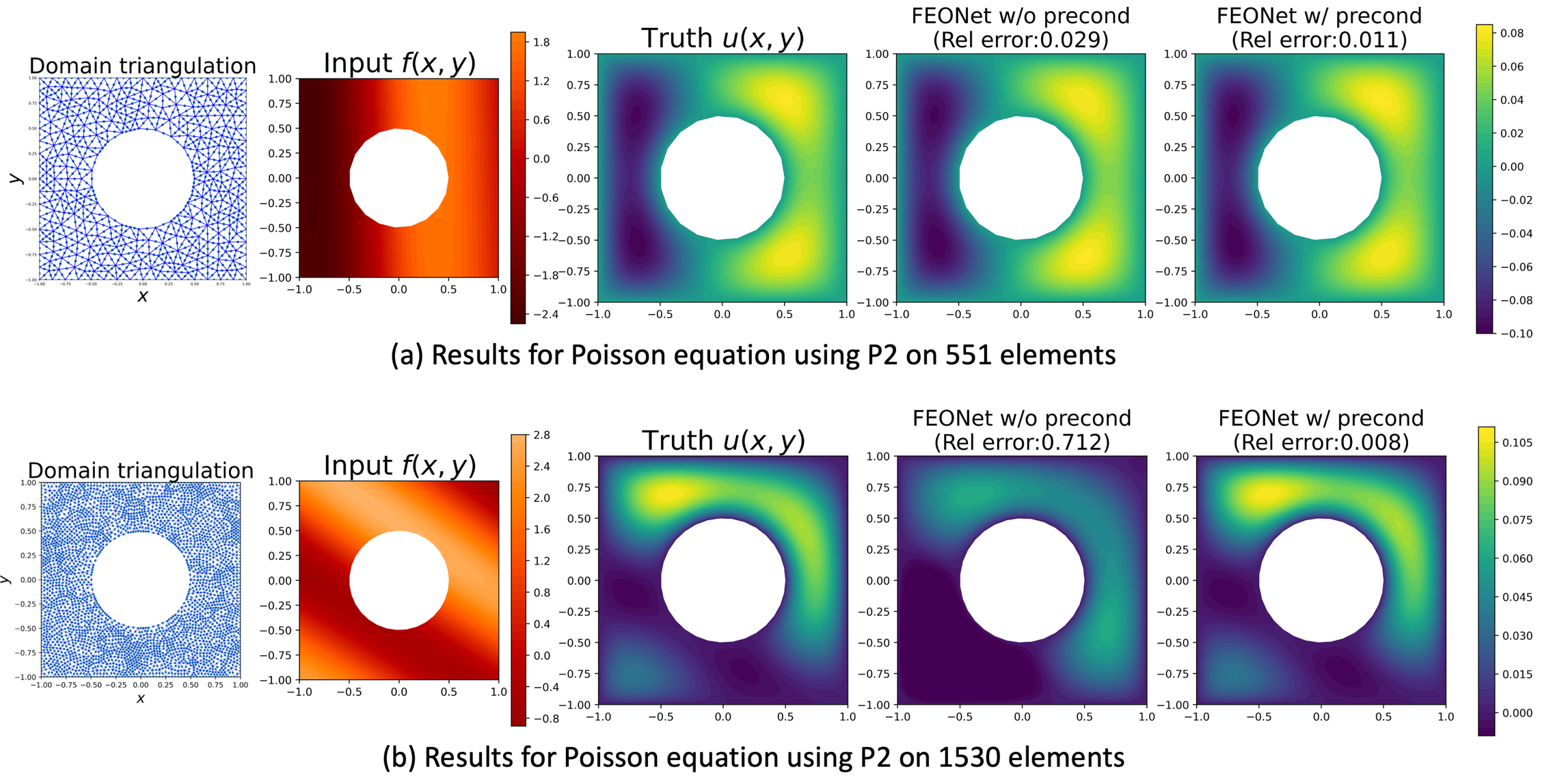}
\end{center}
\caption{The FEONet prediction for 2D Poisson equation without or with preconditioning.}
\label{fig:solution_profile}
\end{figure}

An interesting future research direction is to study the optimization error. Since the neural tangent kernel (see e.g., \cite{NTK}) of the FEONet also depends on the finite element matrices, again from the eigenvalue estimates for FEM, we may theoretically analyze the corresponding optimization error. The analysis of the FEONet for nonlinear equations is also intriguing. In \cite{FEONet}, we showed that the FEONet can also learn a solution operator for nonlinear equations. This ability becomes particularly highlighted if we compare the FEONet with the classical FEM, since we don't need any iterative schemes to predict solutions, allowing us to make a real-time solution prediction for nonlinear equations. These topics are of independent interest, and will be addressed in the forthcoming papers.

\bibliography{references}

\begin{thebibliography}{10}

\bibitem{eigen_7}
M.~Ainsworth, B.~McLean, and T.~Tran.
\newblock Diagonal scaling of stiffness matrices in the {G}alerkin boundary
  element method.
\newblock {\em ANZIAM J.}, 42(1):141--150, 2000.
\newblock Papers in honour of David Elliott on the occasion of his sixty-fifth
  birthday.

\bibitem{eigen_3}
M.~Ainsworth, W.~McLean, and T.~Tran.
\newblock The conditioning of boundary element equations on locally refined
  meshes and preconditioning by diagonal scaling.
\newblock {\em SIAM J. Numer. Anal.}, 36(6):1901--1932, 1999.

\bibitem{bar2019learning}
Y.~Bar-Sinai, S.~Hoyer, J.~Hickey, and M.~P. Brenner.
\newblock Learning data-driven discretizations for partial differential
  equations.
\newblock {\em Proceedings of the National Academy of Sciences},
  116(31):15344--15349, 2019.

\bibitem{Barron_1}
A.~Barron.
\newblock Universal approximation bounds for superpositions of a sigmoidal
  function.
\newblock {\em IEEE Transactions on Information Theory}, 39(3):930--945, 1993.

\bibitem{Barron_2}
A.~R. Barron.
\newblock Approximation and estimation bounds for artificial neural networks.
\newblock {\em Machine Learning}, 14(1):115--133, Jan 1994.

\bibitem{boulle2023mathematical}
N.~Boull{\'e} and A.~Townsend.
\newblock A mathematical guide to operator learning.
\newblock {\em arXiv preprint arXiv:2312.14688}, 2023.

\bibitem{BS_book}
S.~C. Brenner and L.~R. Scott.
\newblock {\em The mathematical theory of finite element methods}, volume~15 of
  {\em Texts in Applied Mathematics}.
\newblock Springer, New York, third edition, 2008.

\bibitem{barron_def_1}
Z.~Chen, J.~Lu, and Y.~Lu.
\newblock On the representation of solutions to elliptic pdes in barron spaces.
\newblock In M.~Ranzato, A.~Beygelzimer, Y.~N. Dauphin, P.~Liang, and J.~W.
  Vaughan, editors, {\em Advances in Neural Information Processing Systems 34:
  Annual Conference on Neural Information Processing Systems 2021, NeurIPS
  2021, December 6-14, 2021, virtual}, pages 6454--6465, 2021.

\bibitem{choi2024spectral}
J.~Choi, T.~Yun, N.~Kim, and Y.~Hong.
\newblock Spectral operator learning for parametric pdes without data reliance.
\newblock {\em Computer Methods in Applied Mechanics and Engineering},
  420:116678, 2024.

\bibitem{chow1998approximate}
E.~Chow and Y.~Saad.
\newblock Approximate inverse preconditioners via sparse-sparse iterations.
\newblock {\em SIAM Journal on Scientific Computing}, 19(3):995--1023, 1998.

\bibitem{costabal2024delta}
F.~S. Costabal, S.~Pezzuto, and P.~Perdikaris.
\newblock $\delta$-pinns: Physics-informed neural networks on complex
  geometries.
\newblock {\em Engineering Applications of Artificial Intelligence},
  127:107324, 2024.

\bibitem{UA_1}
G.~Cybenko.
\newblock Approximation by superpositions of a sigmoidal function.
\newblock {\em Math. Control Signals Systems}, 2(4):303--314, 1989.

\bibitem{de2024wpinns}
T.~De~Ryck, S.~Mishra, and R.~Molinaro.
\newblock wpinns: Weak physics informed neural networks for approximating
  entropy solutions of hyperbolic conservation laws.
\newblock {\em SIAM Journal on Numerical Analysis}, 62(2):811--841, 2024.

\bibitem{rade_upper_2}
W.~E, C.~Ma, and L.~Wu.
\newblock The {B}arron space and the flow-induced function spaces for neural
  network models.
\newblock {\em Constr. Approx.}, 55(1):369--406, 2022.

\bibitem{elbrachter2022dnn}
D.~Elbr{\"a}chter, P.~Grohs, A.~Jentzen, and C.~Schwab.
\newblock Dnn expression rate analysis of high-dimensional pdes: Application to
  option pricing.
\newblock {\em Constructive Approximation}, 55(1):3--71, 2022.

\bibitem{cond_lower}
A.~Ern and J.-L. Guermond.
\newblock Evaluation of the condition number in linear systems arising in
  finite element approximations.
\newblock {\em M2AN Math. Model. Numer. Anal.}, 40(1):29--48, 2006.

\bibitem{Rade_1}
G.~Gnecco and M.~Sanguineti.
\newblock Approximation error bounds via {R}ademacher's complexity.
\newblock {\em Appl. Math. Sci. (Ruse)}, 2(1-4):153--176, 2008.

\bibitem{eigen_4}
I.~G. Graham and W.~McLean.
\newblock Anisotropic mesh refinement: the conditioning of {G}alerkin boundary
  element matrices and simple preconditioners.
\newblock {\em SIAM J. Numer. Anal.}, 44(4):1487--1513, 2006.

\bibitem{barron_ext}
Y.~Gu and M.~K. Ng.
\newblock Deep adaptive basis galerkin method for high-dimensional evolution
  equations with oscillatory solutions.
\newblock {\em SIAM Journal on Scientific Computing}, 44(5):A3130--A3157, 2022.

\bibitem{rade_upper_4}
Q.~Hong, J.~W. Siegel, and J.~Xu.
\newblock A priori analysis of stable neural network solutions to numerical
  pdes, 2021.

\bibitem{mat_anal}
R.~A. Horn and C.~R. Johnson.
\newblock {\em Matrix analysis}.
\newblock Cambridge University Press, Cambridge, second edition, 2013.

\bibitem{UA_2}
K.~Hornik.
\newblock Approximation capabilities of multilayer feedforward networks.
\newblock {\em Neural Networks}, 4(2):251--257, 1991.

\bibitem{NTK}
A.~Jacot, F.~Gabriel, and C.~Hongler.
\newblock Neural tangent kernel: Convergence and generalization in neural
  networks.
\newblock In S.~Bengio, H.~Wallach, H.~Larochelle, K.~Grauman, N.~Cesa-Bianchi,
  and R.~Garnett, editors, {\em Advances in Neural Information Processing
  Systems}, volume~31. Curran Associates, Inc., 2018.

\bibitem{jagtap2020extended}
A.~D. Jagtap and G.~E. Karniadakis.
\newblock Extended physics-informed neural networks (xpinns): A generalized
  space-time domain decomposition based deep learning framework for nonlinear
  partial differential equations.
\newblock {\em Communications in Computational Physics}, 28(5), 2020.

\bibitem{small_eigen_1}
L.~Kamenski.
\newblock Sharp bounds on the smallest eigenvalue of finite element equations
  with arbitrary meshes without regularity assumptions.
\newblock {\em SIAM J. Numer. Anal.}, 59(2):983--997, 2021.

\bibitem{terry_lion}
P.~Kidger and T.~Lyons.
\newblock {Universal Approximation with Deep Narrow Networks}.
\newblock In J.~Abernethy and S.~Agarwal, editors, {\em Proceedings of Thirty
  Third Conference on Learning Theory}, volume 125 of {\em Proceedings of
  Machine Learning Research}, pages 2306--2327. PMLR, 09--12 Jul 2020.

\bibitem{ULG_anal}
S.~Ko, S.-B. Yun, and Y.~Hong.
\newblock Convergence analysis of unsupervised legendre-galerkin neural
  networks for linear second-order elliptic pdes.
\newblock {\em arXiv preprint arXiv:2211.08900}, 2022.

\bibitem{kovachki2021universal}
N.~Kovachki, S.~Lanthaler, and S.~Mishra.
\newblock On universal approximation and error bounds for fourier neural
  operators.
\newblock {\em Journal of Machine Learning Research}, 22(290):1--76, 2021.

\bibitem{lanthaler2022error}
S.~Lanthaler, S.~Mishra, and G.~E. Karniadakis.
\newblock Error estimates for deeponets: A deep learning framework in infinite
  dimensions.
\newblock {\em Transactions of Mathematics and Its Applications}, 6(1), 2022.

\bibitem{lee2023hyperdeeponet}
J.~Y. Lee, S.~CHO, and H.~J. Hwang.
\newblock Hyperdeep{ON}et: learning operator with complex target function space
  using the limited resources via hypernetwork.
\newblock In {\em The Eleventh International Conference on Learning
  Representations}, 2023.

\bibitem{MR4554720}
J.~Y. Lee, J.~Jang, and H.~J. Hwang.
\newblock op{PINN}: physics-informed neural network with operator learning to
  approximate solutions to the {F}okker-{P}lanck-{L}andau equation.
\newblock {\em J. Comput. Phys.}, 480:Paper No. 112031, 21, 2023.

\bibitem{FEONet}
J.~Y. Lee, S.~Ko, and Y.~Hong.
\newblock Finite element operator network for solving parametric pdes.
\newblock {\em arXiv preprint arXiv:2308.04690}, 2023.

\bibitem{li2023fourier}
Z.~Li, D.~Z. Huang, B.~Liu, and A.~Anandkumar.
\newblock Fourier neural operator with learned deformations for pdes on general
  geometries.
\newblock {\em Journal of Machine Learning Research}, 24(388):1--26, 2023.

\bibitem{li2021physics}
Z.~Li, H.~Zheng, N.~Kovachki, D.~Jin, H.~Chen, B.~Liu, K.~Azizzadenesheli, and
  A.~Anandkumar.
\newblock Physics-informed neural operator for learning partial differential
  equations.
\newblock {\em ACM/JMS Journal of Data Science}, 2021.

\bibitem{fenics}
A.~Logg and G.~N. Wells.
\newblock Dolfin: Automated finite element computing.
\newblock {\em ACM Trans. Math. Softw.}, 37(2), apr 2010.

\bibitem{lu2021learning}
L.~Lu, P.~Jin, G.~Pang, Z.~Zhang, and G.~E. Karniadakis.
\newblock Learning nonlinear operators via deeponet based on the universal
  approximation theorem of operators.
\newblock {\em Nature machine intelligence}, 3(3):218--229, 2021.

\bibitem{rade_upper_3}
Y.~Lu, J.~Lu, and M.~Wang.
\newblock A priori generalization analysis of the deep ritz method for solving
  high dimensional elliptic partial differential equations.
\newblock In M.~Belkin and S.~Kpotufe, editors, {\em Proceedings of Thirty
  Fourth Conference on Learning Theory}, volume 134 of {\em Proceedings of
  Machine Learning Research}, pages 3196--3241. PMLR, 15--19 Aug 2021.

\bibitem{marcati2023exponential}
C.~Marcati and C.~Schwab.
\newblock Exponential convergence of deep operator networks for elliptic
  partial differential equations.
\newblock {\em SIAM Journal on Numerical Analysis}, 61(3):1513--1545, 2023.

\bibitem{marwah2023neural}
T.~Marwah, Z.~C. Lipton, J.~Lu, and A.~Risteski.
\newblock Neural network approximations of pdes beyond linearity: A
  representational perspective.
\newblock In {\em International Conference on Machine Learning}, pages
  24139--24172. PMLR, 2023.

\bibitem{mishra2023estimates}
S.~Mishra and R.~Molinaro.
\newblock Estimates on the generalization error of physics-informed neural
  networks for approximating pdes.
\newblock {\em IMA Journal of Numerical Analysis}, 43(1):1--43, 2023.

\bibitem{rade_upper_1}
B.~Neyshabur, Z.~Li, S.~Bhojanapalli, Y.~LeCun, and N.~Srebro.
\newblock Towards understanding the role of over-parametrization in
  generalization of neural networks.
\newblock {\em CoRR}, abs/1805.12076, 2018.

\bibitem{rade_gen}
B.~Neyshabur, R.~Tomioka, and N.~Srebro.
\newblock Norm-based capacity control in neural networks.
\newblock In P.~Grünwald, E.~Hazan, and S.~Kale, editors, {\em Proceedings of
  The 28th Conference on Learning Theory}, volume~40 of {\em Proceedings of
  Machine Learning Research}, pages 1376--1401, Paris, France, 03--06 Jul 2015.
  PMLR.

\bibitem{tight_1}
G.~Ongie, R.~Willett, D.~Soudry, and N.~Srebro.
\newblock A function space view of bounded norm infinite width relu nets: The
  multivariate case.
\newblock In {\em International Conference on Learning Representations}, 2020.

\bibitem{tight_2}
R.~Parhi and R.~D. Nowak.
\newblock Banach space representer theorems for neural networks and ridge
  splines.
\newblock {\em Journal of Machine Learning Research}, 22(43):1--40, 2021.

\bibitem{pinn01}
M.~Raissi, P.~Perdikaris, and G.~E. Karniadakis.
\newblock Physics-informed neural networks: A deep learning framework for
  solving forward and inverse problems involving nonlinear partial differential
  equations.
\newblock {\em Journal of Computational physics}, 378:686--707, 2019.

\bibitem{shin2020convergence}
Y.~Shin, J.~Darbon, and G.~E. Karniadakis.
\newblock On the convergence of physics informed neural networks for linear
  second-order elliptic and parabolic type {PDE}s.
\newblock {\em Commun. Comput. Phys.}, 28(5):2042--2074, 2020.

\bibitem{spectral_barron_2}
J.~W. Siegel and J.~Xu.
\newblock Approximation rates for neural networks with general activation
  functions.
\newblock {\em Neural Networks}, 128:313--321, 2020.

\bibitem{spectral_barron_3}
J.~W. Siegel and J.~Xu.
\newblock High-order approximation rates for shallow neural networks with
  cosine and reluk activation functions.
\newblock {\em Applied and Computational Harmonic Analysis}, 58:1--26, 2022.

\bibitem{Rade_2}
M.~J. Wainwright.
\newblock {\em High-dimensional statistics}, volume~48 of {\em Cambridge Series
  in Statistical and Probabilistic Mathematics}.
\newblock Cambridge University Press, Cambridge, 2019.
\newblock A non-asymptotic viewpoint.

\bibitem{wang2021learning}
S.~Wang, H.~Wang, and P.~Perdikaris.
\newblock Learning the solution operator of parametric partial differential
  equations with physics-informed deeponets.
\newblock {\em Science advances}, 7(40), 2021.

\bibitem{FNM}
J.~Xu.
\newblock Finite neuron method and convergence analysis.
\newblock {\em Commun. Comput. Phys.}, 28(5):1707--1745, 2020.

\bibitem{yang2021b}
L.~Yang, X.~Meng, and G.~E. Karniadakis.
\newblock B-pinns: Bayesian physics-informed neural networks for forward and
  inverse pde problems with noisy data.
\newblock {\em Journal of Computational Physics}, 425:109913, 2021.

\end{thebibliography}
\bibliographystyle{abbrv}

%\section*{Appendix. Proof of Theorem \ref{thm:simple_system} and \ref{thm:geometric_length}}
\begin{comment}
\section*{Appendix: Additional Figure}
\begin{figure}[H]
\begin{center}
\includegraphics[width=\textwidth]{images/precond.png}
\end{center}
\caption{Results for 2D Poisson equation using the FEONet without or with preconditioning.}
\label{fig:solution_profile}
\end{figure}

Figure \ref{fig:solution_profile} shows the result for the operator learning and Figure \ref{fig:preconditioning} shows the rlearning procedure when the precondition is used. 
\begin{figure}[H]
\begin{center}
\includegraphics[width=\textwidth]{images/2_preconditioning.png}
\end{center}
\caption{The relative $L^2$ errors by varying the number of training samples and the model size.}
\label{fig:preconditioning}
\end{figure}
\end{comment}

\end{document}